\theoremstyle{plain}
\newtheorem{thm}{Theorem}[section]
\theoremstyle{remark}
\newtheorem{rem}[thm]{Remark}
\newtheorem*{rem*}{Remark}
\theoremstyle{plain}
\newtheorem{cor}[thm]{Corollary}
\theoremstyle{plain}
\newtheorem{lem}[thm]{Lemma}
\numberwithin{equation}{section}
\DeclareMathOperator{\Ext}{Ext}
\DeclareMathOperator{\Sym}{Sym}
\DeclareMathOperator{\Pos}{Pos}
\DeclareMathOperator{\Tr}{Tr}
\DeclareMathOperator{\uppdensity}{\overline{Dens}}
\DeclarePairedDelimiter{\norm}{\lVert}{\rVert}
\DeclarePairedDelimiter{\abs}{\lvert}{\rvert}
\DeclarePairedDelimiter{\Card}{\lvert}{\rvert}
\newcommand{\st}{\::\:}
\newcommand{\dd}{\mathop{}\!\mathrm{d}}
\begin{document}

\title{Subadditive and multiplicative ergodic theorems}

\author{S\'ebastien Gou\"ezel and Anders Karlsson}
\thanks{Supported in part by the Swiss NSF grant 200021 132528/1.}

\address{
Laboratoire Jean Leray, CNRS UMR 6629, Universit\'e de Nantes, 2 rue de la
Houssini\`ere, 44322 Nantes, France} \email{sebastien.gouezel@univ-nantes.fr}

\address{Section de math\'ematiques,
Universit\'e de Gen\`eve, 2-4 Rue du Li\`evre, Case Postale 64, 1211 Gen\`eve 4,
Suisse} \email{anders.karlsson@unige.ch}

\address{Matematiska institutionen, Uppsala universitet, Box 256, 751 05
Uppsala, Sweden} \email{anders.karlsson@math.uu.se}

\begin{abstract}
A result for subadditive ergodic cocycles is proved that provides more
delicate information than Kingman's subadditive ergodic theorem. As an
application we deduce a multiplicative ergodic theorem generalizing an
earlier result of Karlsson-Ledrappier, showing that the growth of a random
product of semi-contractions is always directed by some horofunction. We
discuss applications of this result to ergodic cocycles of bounded linear
operators, holomorphic maps and topical operators, as well as a random mean
ergodic theorem.
\end{abstract}

\maketitle

\section{Introduction}

Products of random operations arise naturally in a variety of contexts from
pure mathematics to more applied sciences. Typically the operations, or the
maps, do not commute, but one would nevertheless hope to have asymptotic
regularity of various associated quantities. In the commutative case one has
the standard ergodic theorem or what in probability is called the law of
large numbers. A very important and genuinely non-commutative case is that of
products of random matrices. These are governed by the multiplicative ergodic
theorem of Oseledets which in particular is a fundamental theorem in
differentiable dynamics. Another area which belongs to this study is the
subject of random walks on groups.

It is a remarkable fact that, in many such situations, one can introduce a
metric which is invariant or non-expanded by the transformations under
consideration. This gives a way to quantify the behaviour of random products
of maps, such as linear operators, holomorphic maps, symplectomorphisms, or
homogeneous-monotone maps. Due to the non-expansion of the metric and the
triangle inequality, numerical quantities associated to the random products
then satisfy a form of subadditivity.

Kingman proved in~\cite{Ki68} the subadditive ergodic theorem, which is a
generalization of the usual ergodic theorem to subadditive cocycles. This
extension is very useful, with many applications. In the particular case of
random products,  Kingman's theorem asserts that there is a well-defined growth rate,
or in a different terminology, a certain speed with which these products tend to infinity.

Our goal is to understand to what extent random products tend to infinity
following a specific direction, using the notion of horofunction.
Horofunctions made one of their first explicit appearances in the 1926
Wolff-Denjoy theorem which describes the dynamics of holomorphic self-maps of
the unit disk. As was noted by those authors, and extended and commented on
by several people since then, the mechanism behind this result is the
Schwartz lemma which implies that the holomorphic maps do not increase the
Poincar\'e distance between points, and the fact that the unit disk with this
metric is the hyperbolic plane, see e.g.~\cite{K01,KeL07,AR14}.

Our strategy is to show first a substantial refinement of Kingman's theorem.
Then, we apply it to prove a very general multiplicative ergodic theorem,
extending one aspect of the Wolff-Denjoy theorem to a vastly more general
setting: the asymptotic behaviour of random products of 1-Lipschitz maps of
any metric space in terms of horofunctions. This generalizes and reproves the
main theorem in~\cite{KL06}, which in turn extends several known results,
such as the one of Oseledets mentioned above, and which has delivered unexpected
applications. Our theorem has a weak-type formulation involving linear or
metric functionals (a generalization of horofunctions to a non-proper
setting). Hence, it can hold in a very general setting, contrary to results
yielding a stronger convergence, that are known to fail if the geometric
properties of the space are not good enough. Moreover, under suitable
assumptions on the space, our a priori weaker statement can automatically be
promoted to the stronger one.

As for further new applications, our theory leads to an ergodic theorem for
cocycles of bounded linear operators. Ruelle proved the first such theorem in
infinite dimension assuming compactness of the operators~\cite{R82}. It was
generalized by Ma\~n\'e, Thieullen and others, see the recent
monograph~\cite{LL10} for more details. The interest for such statements
about semi-flows on Hilbert spaces comes from Ruelle to the more recent work
of Lian and Young in the study of certain stochastic differential equations,
partial differential equations of evolution with application to hydrodynamic
turbulence, such as the Navier-Stokes equation~\cite{R82,R84,ER85,LY12}.
There are also other potential contexts of application, see for example the
remark in~\cite[p.~10]{F02}.

Our subadditive theorem is stated in Paragraph~\ref{subsec:new_kingman} and
proved in Section~\ref{sec:Subadditive-ergodic-cocycles}. The ergodic theorem
for random products is then stated in Paragraph~\ref{subsec:ergodic_product}
and proved in Section~\ref{sec:Application-to-multiplicative}. Finally,
Paragraph~\ref{subsec:applications} and the remaining sections of this
article are devoted to a brief discussion of some applications.

\subsection{Existence of good times for subadditive cocycles}
\label{subsec:new_kingman}

Let $(\Omega,\mu)$ be a measure space with $\mu(\Omega)=1$ and let
$T:\Omega\rightarrow\Omega$ be an ergodic measure preserving map. A
measurable function $a:\mathbb{N}\times\Omega\rightarrow\mathbb{R}$ which
satisfies\[ a(n+m,\omega)\leq a(n,\omega)+a(m,T^{n}\omega)\] for all integers
$n,m>0$ and a.e.\ $\omega\in\Omega$ is called a \emph{subadditive cocycle.
}For convenience we also set $a(0,\omega)\equiv0$. One says that $a$ is
\emph{integrable} if $a(1,\omega)$ is integrable and one defines the
\emph{asymptotic average}
\[
  A=\inf_{n}\frac{1}{n}\int_{\Omega}a(n,\omega)\dd\mu \in [-\infty, +\infty).
\]
Kingman's theorem asserts that, almost surely, $a(n,\omega)/n$ tends to $A$.
Moreover, if $A>-\infty$, the convergence also holds in $L^1(\mu)$.

\medskip

We prove in Section~\ref{sec:Subadditive-ergodic-cocycles} the following
subadditive ergodic statement (\emph{cf}.\ Problem 3.3 in~\cite{K02}):
\begin{thm}
\label{thm:subadd} Let $a(n,\omega)$ be an integrable and subadditive cocycle
relative to the ergodic system $(\Omega,\mu,T)$ as above, with finite
asymptotic average $A$. Then for almost every $\omega$ there are integers
$n_{i}\coloneqq n_{i}(\omega)\rightarrow\infty$ and positive real numbers
$\delta_{\ell}\coloneqq \delta_{\ell}(\omega)\rightarrow0$ such that for
every~$i$ and every $\ell\leq n_{i}$,
\begin{equation}
  \label{eq:subadd}
  \abs{a(n_{i},\omega)-a(n_{i}-\ell,T^{\ell}\omega)-A\ell}
  \leq
  \ell \delta_{\ell}(\omega).
\end{equation}
\end{thm}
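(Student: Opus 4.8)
Replacing $a(n,\omega)$ by $h(n,\omega)\coloneqq a(n,\omega)-A\,n$, which is again an integrable subadditive cocycle but now has asymptotic average $0$, the inequality \eqref{eq:subadd} becomes $\abs{h(n_i,\omega)-h(n_i-\ell,T^\ell\omega)}\le\ell\,\delta_\ell(\omega)$; so we may assume $A=0$. By Kingman's theorem $h(n,\cdot)/n\to0$ both almost surely and in $L^1$. Fix the $T$-invariant full-measure set $\Omega_0$ on which this holds, on which $\sup_n h(n,\cdot)/n<\infty$, and on which Birkhoff's theorem holds for the countably many integrable functions appearing below; everything is stated for $\omega$ whose whole orbit lies in $\Omega_0$.

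\textbf{The trivial half.} Subadditivity gives $h(n,\omega)\le h(\ell,\omega)+h(n-\ell,T^\ell\omega)$, hence $h(n,\omega)-h(n-\ell,T^\ell\omega)\le h(\ell,\omega)$ for \emph{all} $n\ge\ell$. Since $\omega\in\Omega_0$, the quantity $\eta_\ell(\omega)\coloneqq 1/\ell+\sup_{k\ge\ell}\max\bigl(h(k,\omega)/k,\,0\bigr)$ decreases to $0$, and $h(n,\omega)-h(n-\ell,T^\ell\omega)\le\ell\,\eta_\ell(\omega)$ with no constraint on $n$. Thus one of the two inequalities inside \eqref{eq:subadd} is automatic; the entire difficulty is to select times $n_i$ realizing the \emph{reverse} inequality $h(n_i-\ell,T^\ell\omega)-h(n_i,\omega)\le\ell\,\delta_\ell(\omega)$ uniformly in $\ell\le n_i$, with a single $\delta_\ell(\omega)\to0$.

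\textbf{The essential half.} I would first observe that the set of $\omega$ for which suitable $(n_i,\delta_\ell)$ exist is $T$-invariant: given good data for $\omega$, the times $n_i-1$ together with $\delta'_\ell\coloneqq\bigl(\delta_1(\omega)+(\ell+1)\delta_{\ell+1}(\omega)\bigr)/\ell\to0$ are good for $T\omega$, as one sees by splitting $h(n_i-1,T\omega)-h(n_i-1-\ell,T^{\ell+1}\omega)$ through $h(n_i,\omega)$ and applying \eqref{eq:subadd} for $\omega$ at scales $1$ and $\ell+1$. By ergodicity this set is null or conull, so it suffices to exclude the null case. Assuming it, for a.e.\ $\omega$ — hence along a.e.\ orbit — one gets, for a fixed null sequence $\delta^{0}$ built from the Kingman rates ($\delta^0_\ell=\eta_\ell(\omega)+\sup_{k\ge\ell}\abs{h(k,\omega)}/k$, chosen precisely so that the scale $\ell=n$ can never be bad): for all large $n$ there is a scale $\ell(n)\le n$ with $h(n-\ell(n),T^{\ell(n)}\omega)-h(n,\omega)>\ell(n)\,\delta^{0}_{\ell(n)}$. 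Using Egorov's theorem to fix a set on which $h(m,\cdot)/m\to0$ uniformly, and Birkhoff's theorem so that the orbit meets it with frequency close to $1$, one feeds this relation back into itself along the orbit — passing from the current time/basepoint to the truncated one — and, through a careful accounting of the sizes of the successive truncations (and of how often the basepoints lie outside the good set), one derives a quantitative violation of Kingman's convergence for $h(n,\omega)/n$ or of the $L^1$ convergence $\tfrac1n\int\abs{h(n,\cdot)}\to0$. Hence a.e.\ $\omega$ admits, for every $\varepsilon>0$, infinitely many $\varepsilon$-good times; a diagonal argument over $\varepsilon=\varepsilon_j\downarrow0$ (and over the finitely many small scales at each level), taking $n_i$ from the $\varepsilon_{j(i)}$-good times and setting $\delta_\ell(\omega)\coloneqq\eta_\ell(\omega)+\sup_i\,[h(n_i-\ell,T^\ell\omega)-h(n_i,\omega)]^+/\ell$, yields a single sequence $n_i\to\infty$ and a single null sequence $\delta_\ell(\omega)$, which is \eqref{eq:subadd}.

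\textbf{Main obstacle.} The crux is the uniformity over \emph{all} $\ell\le n_i$ with one sequence $n_i$ and one $\delta_\ell\to0$. For each fixed $\ell$ the reverse inequality is an easy consequence of Kingman's theorem applied at the basepoint $T^\ell\omega$; but as $\ell$ varies the basepoints $T^\ell\omega$ re-enter the ``bad'' region infinitely often, so scale-by-scale arguments do not assemble. Controlling this requires the genuinely quantitative input of Egorov plus Birkhoff, and — most delicately — the bookkeeping in the iteration that prevents the adversary from hiding all the loss in a few very long truncations (where the rate $\delta^0_\ell$ is small); this, together with the elementary but fiddly handling of the small scales $\ell$, is where essentially all the work of Section~\ref{sec:Subadditive-ergodic-cocycles} goes.
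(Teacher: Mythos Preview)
Your reduction to $A=0$ and your treatment of the upper bound match the paper. The $T$-invariance of the good set is correct and tidy, but the paper does not argue this way; the real issue is the paragraph labeled ``The essential half''.

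The gap is exactly where you write ``one feeds this relation back into itself along the orbit \dots and, through a careful accounting \dots, one derives a quantitative violation of Kingman's convergence''. That sentence \emph{is} the theorem, and you have not carried it out. The difficulty you yourself flag --- that the basepoints $T^\ell\omega$ move --- does not go away in your iteration: each application of the bad-scale relation shifts the basepoint, so the ``all large $n$'' threshold and the null sequence $\delta^0$ change at every step, and Egorov only pins them down on a set the orbit visits with frequency $1-\epsilon$. When the iteration lands outside the Egorov set you must absorb an error, and when the witnessing $\ell(n)$ is large the gain $\ell(n)\,\delta^0_{\ell(n)}$ is tiny; showing that the accumulated errors cannot swallow the accumulated gains is precisely the combinatorial core of the theorem, and nothing in your sketch indicates how to balance these two regimes. (Your final diagonal step is also not quite right as stated: ``$\epsilon$-good'' has to allow slack at the finitely many small scales, which you allude to but do not set up.)

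The paper sidesteps the moving-basepoint problem by passing to the \emph{reversed} cocycle $b(n,\omega)\coloneqq a(n,T^{-n}\omega)$ for $\tau=T^{-1}$ (after making $T$ invertible via the natural extension). The key identity is
\[
a(n,\omega)-a(n-\ell,T^\ell\omega)=b(n,\omega')-b(n-\ell,\omega'),\qquad \omega'=T^n\omega,
\]
in which the basepoint $\omega'$ no longer depends on $\ell$. One is then reduced to a fixed-basepoint statement: for a.e.\ $\omega'$, most $n$ satisfy $b(n,\omega')-b(n-\ell,\omega')\ge -\ell\,\delta_\ell$ for all $\ell\le n$. This the paper proves \emph{directly}, not by contradiction, via a Steele/Pliss-type backward decomposition of $[0,N]$ (Lemmas~\ref{lem:warm-up}--\ref{lem:alldensities}): at each bad time one jumps by the witnessing $\ell$, at each good time by a fixed block length $s$ chosen so that $\int b^+(s,\cdot)/s$ is small (this is where the $L^1$ part of Kingman enters). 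The output is an explicit upper-density bound on the bad times, hence the stronger conclusion of Remark~\ref{rmk:subadd}, which a bare contradiction argument would not yield. A final measure-theoretic transfer (the sets $P_n=T^{-n}O_n$ and an averaging over $n$) converts the density statement for $b$ back into one for $a$.
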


This statement significantly refines Proposition~4.2 in~\cite{KM99}. It is
not a consequence of Kingman's theorem: by subadditivity, we have
\begin{equation*}
  a(n_{i},\omega)-a(n_{i}-\ell,T^{\ell}\omega) \leq a(\ell, \omega) \sim A \ell,
\end{equation*}
by Kingman's theorem, so the upper bound in~\eqref{eq:subadd} readily follows
from Kingman's theorem. On the other hand, the lower bound, which asserts
that the cocycle is close to being additive at \emph{all} times $\ell$
between $0$ and $n_i$, is much more delicate.

This lower bound is reminiscent of Pliss' lemma, a combinatorial lemma which
proved very useful in hyperbolic dynamics, see for
instance~\cite{ABV00}. For any additive sequence tending linearly to
infinity, this lemma entails the existence of good times $n_i$ for which the
behavior of the sequence between $n_i-\ell$ and $n_i$ is well controlled for
all $\ell \leq n_i$. Our statement is both weaker (since there is an
additional error $\ell \delta_{\ell}$) and stronger since it applies in
random subadditive situations and makes it possible to approach the true
asymptotics $A \ell$.

We will apply Theorem~\ref{thm:subadd} to the context of multiplicative
ergodic theorems below, but it could also be of interest for different
questions, for example~\cite{GG14} used~\cite[Prop.~4.2]{KM99} to re-prove
and extend Livsic's theorem of~\cite{Ka11}.
\begin{rem}
\label{rmk:subadd} Define the upper asymptotic density of a subset $U$ of the
natural numbers as
\[
  \uppdensity(U)=\limsup_{N\rightarrow\infty}\Card{U\cap [0,N-1]}/N.
\]
The proof of Theorem~\ref{thm:subadd} gives in fact more information, namely
that on a set of large measure one can take $\delta_\ell$ independent of
$\omega$, and one can have many good times $n_i$. More precisely, fix
$\rho>0$, then there exist a sequence $\delta_{\ell}\rightarrow 0$ and a
subset $O\subset\Omega$ of measure at least $1-\rho$ such that, for every
$\omega\in O$, the subset of good times $A(\omega)\subset\mathbb{N}$ (made of
those $n$ for which~\eqref{eq:subadd} holds for all $\ell\leq n$) has upper
asymptotic density at least $1-\rho$.
\end{rem}

\subsection{Random products and metric functionals}
\label{subsec:ergodic_product}

Horocycles, horodisks etc are concepts originally coming from two-dimensional
hyperbolic geometry and complex analysis. A general definition of the
corresponding horofunctions (now also called Busemann functions) in terms of
geodesic rays $\gamma(t)$ was noted by Busemann:
\[
b_{\gamma}(\cdot)=\lim_{t\rightarrow\infty}d(\cdot,\gamma(t))-d(\gamma(0),\gamma(t)).
\]
As emphasized by Gromov, this definition leads to a natural bordification
of metric spaces, by mapping the space into its set of continuous
functions equipped with the topology of uniform convergence on bounded sets.
We consider instead pointwise convergence, following for example~\cite{GV12}.
Let $(X,d)$ be a metric space, fix $x_{0}\in X$, and define the continuous
injection
\begin{align*}
\Phi:X&\hookrightarrow\mathbb{R}^{X}\\
x & \mapsto h_{x}(\cdot)\coloneqq d(\cdot,x)-d(x_{0},x).
\end{align*}
The functions $h_x$ are all $1$-Lipschitz maps. As indicated, we endow the
space $\mathbb{R}^{X}$ of real valued functions on $X$ with the product
topology, i.e., the topology of pointwise convergence. The closure of the image
$ \overline{\Phi(X)}$ will therefore be compact.  By definition we call the elements in this compact set for
\emph{metric functionals}. Thus, to every point $x$ there is a unique
associated metric functional $h_x$, and then there may be further functionals
obtained as limit points:
\[
\widehat{X}\coloneqq \overline{\Phi(X)}\setminus  \Phi(X).
\]

To relate to the standard notion, we call limits, as $x\rightarrow\infty$, of $h_x$ in the uniform
convergence on bounded sets for  \emph{horofunctions}. 
If $X$ is proper and geodesic, then $\widehat{X}$ is precisely the set of
horofunctions.
If the metric space is particularly nice, for instance CAT(0), then
horofunctions and Busemann functions coincide. For non-proper metric spaces a
Busemann function might not be a horofunction since the convergence might not
be uniform on bounded sets, and conversely a horofunction might not be a
Busemann function since it might be obtained as a limit which does not
correspond to any geodesic ray. Our terminology is in part inspired by the
simple fact that for infinite dimensional Hilbert space $H$, the set
$\widehat{H}$ contains the closed unit ball of continuous linear functionals. 
The definition of metric functionals is also somewhat parallel to the one of linear functionals: metric functionals are maps 
$X\rightarrow \mathbb{R}$ that vanish at the origin $x_0$ and respect the metric structure of the spaces.

A map $f:X\rightarrow X$ is called \emph{non-expansive}, or
\emph{semi-contractive}, if
\[
  d(f(x),f(y))\leq d(x,y)
\]
for all $x,y\in X$. The set of all semi-contractive maps on $X$ is denoted by
$SC(X)$.

As in the previous paragraph, let $(\Omega,\mu)$ be a measure space with
$\mu(\Omega)=1$ and let $T:\Omega\rightarrow\Omega$ be an ergodic measure
preserving map. Given a map $\varphi:\Omega\rightarrow SC(X)$, one forms the
associated \emph{ergodic cocycle }
\begin{equation}
  \label{eq:def_u}
  u(n,\omega)=\varphi(\omega)\varphi(T\omega)\dotsm \varphi(T^{n-1}\omega).
\end{equation}
Note the order in which the maps are composed. We require a weak
measurability property: for all $x\in X$ and all $n \in \mathbb{N}$, the map
$\omega \mapsto u(n,\omega)x$ from $\Omega$ to $X$ should be measurable. For
instance, this is the case if $\varphi:\Omega \to SC(X)$ is measurable where
$SC(X)$ is endowed with the compact-open topology (i.e., the topology of
uniform convergence on compact subsets of $X$) and $X$ is locally compact
(this last assumption ensures that the composition map $SC(X) \times SC(X)
\to SC(X)$ is continuous, so that $u(n,\cdot):\Omega \to SC(X)$ is also
measurable).
This is also the case when $X$ is a Banach space and $\varphi$ is measurable
from $\Omega$ to the space of bounded linear operators on $X$ with the
topology of norm convergence.

We say that the above cocycle $u(n,\omega)$ is \emph{integrable} if
\[
\int_{\Omega}d(x,\varphi(\omega)x)\dd\mu<\infty,
\]
a condition which is independent of $x\in X$. In this case, the subadditive
cocycle $a(n,\omega)=d(x, u(n,\omega)x)$ is also integrable. Hence, by
Kingman's theorem, $d(x, u(n,\omega)x)/n$ converges almost surely to a limit
$A\geq 0$ (which does not depend on the choice of the basepoint $x$).

In Section~\ref{sec:Application-to-multiplicative}, the above subadditive
ergodic statement is used to establish the following multiplicative ergodic
theorem:
\begin{thm}
\label{thm:met}Let $u(n,\omega)$ be an integrable ergodic cocycle of
semi-contractions of a metric space $(X,d)$.
Then for a.e.\ $\omega$ there exists a metric functional
$h^{\omega}$ of $X$ such that for all $x$
\[
\lim_{n\rightarrow\infty}-\frac{1}{n}h^{\omega}(u(n,\omega)x)=\lim_{n\rightarrow\infty} \frac {1}{n}d(x, u(n,\omega)x).\]
Moreover, if $X$ is separable and $\Omega$ is a standard probability space,
one can choose the map $\omega \mapsto h^\omega$ to be Borel measurable.
\end{thm}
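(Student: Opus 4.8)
The plan is to combine Theorem~\ref{thm:subadd} with a diagonal/compactness argument in the space $\overline{\Phi(X)}$ of metric functionals. Fix a basepoint $x_0 \in X$ and write $a(n,\omega) = d(x_0, u(n,\omega)x_0)$, which is an integrable subadditive cocycle with asymptotic average $A$. If $A = 0$ the statement is trivial (any metric functional works, since $-h^\omega(u(n,\omega)x_0) \le d(x_0, u(n,\omega)x_0) = a(n,\omega) = o(n)$ forces the left-hand limit to be $0$ as well, using that $h^\omega$ is $1$-Lipschitz and vanishes at $x_0$). So assume $A > 0$. Apply Theorem~\ref{thm:subadd}: for a.e.\ $\omega$ there are times $n_i = n_i(\omega) \to \infty$ and reals $\delta_\ell = \delta_\ell(\omega) \to 0$ such that $\abs{a(n_i,\omega) - a(n_i - \ell, T^\ell\omega) - A\ell} \le \ell\,\delta_\ell(\omega)$ for all $\ell \le n_i$. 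The key observation is that $a(n_i - \ell, T^\ell\omega) = d(x_0, u(n_i-\ell, T^\ell\omega)x_0)$, and by the cocycle identity $u(n_i,\omega) = u(\ell,\omega)\,u(n_i-\ell, T^\ell\omega)$, so $d(u(\ell,\omega)x_0, u(n_i,\omega)x_0) = d(u(\ell,\omega)x_0,\, u(\ell,\omega) u(n_i-\ell,T^\ell\omega)x_0) \le d(x_0, u(n_i-\ell,T^\ell\omega)x_0) = a(n_i - \ell, T^\ell\omega)$ by non-expansiveness. Thus from Theorem~\ref{thm:subadd},
\[
  d(u(\ell,\omega)x_0,\, u(n_i,\omega)x_0) \le a(n_i,\omega) - A\ell + \ell\,\delta_\ell(\omega).
\]

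Now evaluate the metric functional $h_{u(n_i,\omega)x_0} \in \Phi(X)$ at the point $u(\ell,\omega)x_0$:
\[
  h_{u(n_i,\omega)x_0}\bigl(u(\ell,\omega)x_0\bigr)
  = d\bigl(u(\ell,\omega)x_0, u(n_i,\omega)x_0\bigr) - d\bigl(x_0, u(n_i,\omega)x_0\bigr)
  \le -A\ell + \ell\,\delta_\ell(\omega).
\]
By compactness of $\overline{\Phi(X)}$, after passing to a subsequence of the $n_i$ (depending on $\omega$), the functions $h_{u(n_i,\omega)x_0}$ converge pointwise to some metric functional $h^\omega \in \overline{\Phi(X)}$. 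Passing to the limit in the displayed inequality along the subsequence gives $h^\omega(u(\ell,\omega)x_0) \le -A\ell + \ell\,\delta_\ell(\omega)$ for every $\ell$, hence $-\tfrac{1}{\ell}h^\omega(u(\ell,\omega)x_0) \ge A - \delta_\ell(\omega)$, so $\liminf_{\ell} -\tfrac1\ell h^\omega(u(\ell,\omega)x_0) \ge A$. The reverse inequality is automatic: since $h^\omega$ is $1$-Lipschitz and $h^\omega(x_0)=0$ (a closed condition preserved under pointwise limits), $-h^\omega(u(\ell,\omega)x_0) \le d(x_0, u(\ell,\omega)x_0) = a(\ell,\omega)$, and $a(\ell,\omega)/\ell \to A$ by Kingman. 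Combining, $-\tfrac1\ell h^\omega(u(\ell,\omega)x_0) \to A = \lim \tfrac1n d(x_0, u(n,\omega)x_0)$. Finally the limit is basepoint-independent: for any other $x$, $\abs{h^\omega(u(\ell,\omega)x) - h^\omega(u(\ell,\omega)x_0)} \le d(u(\ell,\omega)x, u(\ell,\omega)x_0) \le d(x,x_0)$ is bounded, and likewise $\abs{d(x,u(\ell,\omega)x) - d(x_0,u(\ell,\omega)x_0)}$ is bounded, so dividing by $\ell$ and letting $\ell\to\infty$ yields the same limits.

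For the measurability statement, assume $X$ separable and $\Omega$ standard. The subsequence extraction above must be performed measurably in $\omega$. The clean way is to avoid choosing a subsequence pointwise: instead, consider the map $\omega \mapsto K(\omega) \subset \overline{\Phi(X)}$ sending $\omega$ to the (nonempty, by the argument above) set of metric functionals $h$ satisfying $\lim_\ell -\tfrac1\ell h(u(\ell,\omega)x) = A(\omega)$ for all $x$ in a fixed countable dense subset of $X$ (this suffices by the $1$-Lipschitz property). One checks that $\overline{\Phi(X)}$ is a compact metrizable space (separability of $X$ makes the product topology on the relevant set metrizable), that $(\omega, h) \mapsto h(u(\ell,\omega)x)$ is Borel for each fixed $\ell, x$ (using the weak measurability hypothesis on the cocycle and joint continuity of evaluation), and hence that $K(\omega)$ has Borel graph with nonempty closed values. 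A measurable selection theorem (Kuratowski--Ryll-Nardzewski, or Jankov--von Neumann on the standard space $\Omega$) then produces a Borel map $\omega \mapsto h^\omega \in K(\omega)$.

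The main obstacle is the measurability part, and within it the point requiring genuine care is checking that the ``good set'' $K(\omega)$ has a Borel graph: this needs the weak measurability hypothesis on $u(n,\cdot)$ together with metrizability of $\overline{\Phi(X)}$ and a careful interchange of the countably many limits defining membership in $K(\omega)$. The ergodic-theoretic core — the passage from Theorem~\ref{thm:subadd} to the inequality $h_{u(n_i,\omega)x_0}(u(\ell,\omega)x_0) \le -A\ell + \ell\delta_\ell$ — is short once one notices the factorization $u(n_i,\omega) = u(\ell,\omega)u(n_i-\ell,T^\ell\omega)$ and that non-expansiveness converts the subadditive defect bound into a distance bound along the orbit.
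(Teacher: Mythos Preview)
Your argument for the existence of $h^\omega$ is essentially the paper's: apply Theorem~\ref{thm:subadd} to $a(n,\omega)=d(x_0,u(n,\omega)x_0)$, use the factorisation $u(n_i,\omega)=u(\ell,\omega)\,u(n_i-\ell,T^\ell\omega)$ together with non-expansiveness to bound $h_{u(n_i,\omega)x_0}(u(\ell,\omega)x_0)$, and pass to a limit point. One technical slip: you write ``by compactness of $\overline{\Phi(X)}$, after passing to a subsequence\ldots'', but $\overline{\Phi(X)}$ with the product topology is compact, not sequentially compact, unless $X$ is separable. The paper handles the general case by observing that the sets
\[
Y_i=\{h\in\overline{\Phi(X)}:\forall\ell\le n_i,\ h(x_\ell)\le -(A-\delta_\ell)\ell\}
\]
are nonempty, closed, and decreasing, so $\bigcap_i Y_i\neq\varnothing$ by compactness; any $h$ in the intersection works. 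This is an easy fix.

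The measurability argument, however, has a genuine gap. You define $K(\omega)$ as the set of $h$ for which $\lim_\ell -\tfrac{1}{\ell}h(u(\ell,\omega)x)=A$ along a countable dense set of $x$, and assert that $K(\omega)$ has closed values. But a limit condition of this type is a countable intersection of countable unions of closed sets, not a closed condition; there is no reason $K(\omega)$ should be closed in $\overline{\Phi(X)}$, and without closed values Kuratowski--Ryll-Nardzewski does not apply. (Jankov--von Neumann would give only a universally measurable selector, not a Borel one.) The paper circumvents this by invoking Remark~\ref{rmk:subadd}: on each piece $\Lambda_i$ of a countable partition of $\Omega$ one has a \emph{fixed} sequence $\delta_{i,\ell}\to 0$ independent of $\omega$, and after Lusin one may assume $\omega\mapsto u(\ell,\omega)x_0$ is continuous on $\Lambda_i$. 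Then the set
\[
A(\omega)=\{h\in\overline{\Phi(X)}:\forall\ell,\ h(u(\ell,\omega)x_0)\le -(A-\delta_{i,\ell})\ell\}
\]
is defined by countably many \emph{closed} inequalities, hence is compact, nonempty (by the existence part), and upper semi-continuous in $\omega$; a measurable selection theorem then applies directly. The point you are missing is precisely this use of the uniformity in Remark~\ref{rmk:subadd} to replace the limit condition by a closed one.
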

The main theorem of~\cite{KL06} is the same statement, but with the
additional assumption that the cocycle $u$ takes it values in isometries of
$X$, instead of semi-contractions (moreover it was formulated only for proper spaces).
This additional assumption makes it
possible to use the action of the cocycle on the space of metric functionals,
and use an additive cocycle there. This proof cannot work for
semi-contractions. The present proof will instead use
Theorem~\ref{thm:subadd} and is thus quite different.

Theorem~\ref{thm:met} was conjectured in~\cite{K04} for proper
metric spaces. With a use of the Hahn-Banach theorem, see
Section~\ref{sec:Application-to-multiplicative}, this specializes to the
following statement in the case of Banach spaces.
\begin{cor}
\label{cor:Banach}Let $u(n,\omega)$ be an integrable ergodic cocycle of
semi-contractions of a subset $D$ of a Banach space $X$. Then for a.e.\
$\omega$ there is a linear functional $f^{\omega}$ of norm 1 such that for
any $x\in D$,
\[
\lim_{n\rightarrow\infty}\frac{1}{n}f^{\omega}(u(n,\omega)x)=\lim_{n\rightarrow\infty}\frac{1}{n}\norm{
u(n,\omega)x} .
\]
\end{cor}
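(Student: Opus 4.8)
The plan is to deduce Corollary~\ref{cor:Banach} from Theorem~\ref{thm:met} by identifying the relevant metric functionals on a Banach space with (restrictions of) norm-one linear functionals. First I would apply Theorem~\ref{thm:met} to the cocycle $u(n,\omega)$ acting on $D$, but with the ambient metric being the one induced from the norm of $X$; it is harmless to work with the cocycle on all of $X$ if one prefers, since semi-contractions of $D$ need not extend, so I would be careful to run the theorem on $D$ itself, viewed as a metric space with basepoint $x_0 \in D$. This produces, for a.e.\ $\omega$, a metric functional $h^\omega \in \overline{\Phi(D)}$ with
\[
  \lim_{n\to\infty} -\tfrac1n h^\omega(u(n,\omega)x) = \lim_{n\to\infty} \tfrac1n \norm{u(n,\omega)x - x_0} = \lim_{n\to\infty}\tfrac1n\norm{u(n,\omega)x} =: A,
\]
the last equality holding because $\norm{u(n,\omega)x-x_0}$ and $\norm{u(n,\omega)x}$ differ by a bounded quantity. (If $x_0 \ne 0$ one simply translates; I will assume $x_0 = 0 \in D$ for cleanliness, or note that the statement is basepoint-independent.)

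The key step is then to extract a norm-one linear functional $f^\omega$ from the metric functional $h^\omega$. By definition $h^\omega$ is a pointwise limit of functions $h_{x_k}(\cdot) = \norm{\cdot - x_k} - \norm{x_k}$ for some net (or, using separability, sequence) $x_k \in D$. The plan is to use Hahn-Banach as follows: for each $k$, pick a supporting functional $f_k$ of norm $1$ with $f_k(x_k) = \norm{x_k}$; then for every $y$,
\[
  h_{x_k}(y) = \norm{y - x_k} - \norm{x_k} \geq -f_k(y),
\]
since $\norm{y-x_k} \ge f_k(x_k - y) = \norm{x_k} - f_k(y)$. Passing to a weak-$*$ limit point $f^\omega$ of the $f_k$ (the closed unit ball of $X^*$ is weak-$*$ compact, and $\norm{f^\omega} \le 1$), one obtains $h^\omega(y) \ge -f^\omega(y)$ for all $y \in D$, i.e.\ $-h^\omega(u(n,\omega)x) \le f^\omega(u(n,\omega)x)$. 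Hence
\[
  A = \lim_n -\tfrac1n h^\omega(u(n,\omega)x) \le \liminf_n \tfrac1n f^\omega(u(n,\omega)x) \le \limsup_n \tfrac1n f^\omega(u(n,\omega)x) \le \lim_n \tfrac1n \norm{u(n,\omega)x} = A,
\]
the last inequality because $\norm{f^\omega}\le 1$, so all the quantities coincide and equal $A$. In particular, if $A > 0$ this forces $\norm{f^\omega} = 1$; if $A = 0$ one may replace $f^\omega$ by any fixed norm-one functional, for which both sides are trivially $0$.

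The main obstacle, and the point requiring the most care, is the interaction between taking pointwise limits defining $h^\omega$ and the weak-$*$ limits defining $f^\omega$, together with the measurability clause. For a single $\omega$ the above argument is clean, but to get $\omega \mapsto f^\omega$ Borel measurable I would want to perform the Hahn-Banach selection measurably. One route: since $X$ is separable (one may assume this, as the closed span of a countable dense subset of the orbit of $x_0$ suffices), the unit ball of $X^*$ with the weak-$*$ topology is a compact metrizable space, and the set-valued map assigning to $\omega$ the (nonempty, closed) set of norm-$\le 1$ functionals $f$ satisfying $\lim_n \tfrac1n f(u(n,\omega)x) = A(\omega)$ for all $x$ in a countable dense set has measurable graph; a measurable selection theorem (Kuratowski--Ryll-Nardzewski) then yields a Borel $\omega \mapsto f^\omega$. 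Alternatively, and more in the spirit of the paper, one pushes the measurability of $\omega \mapsto h^\omega$ from Theorem~\ref{thm:met} through the Hahn-Banach construction directly. I would present the single-$\omega$ argument in detail and then remark that the measurable version follows by a standard selection argument, citing the measurability already provided by Theorem~\ref{thm:met}.
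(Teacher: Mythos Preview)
Your proposal is correct and matches the paper's first proof: apply Theorem~\ref{thm:met}, then extract a norm-one linear functional dominating $-h^\omega$ via Hahn--Banach and weak-$*$ compactness (the paper packages this extraction as a separate lemma, proved by approximating $h$ on finite sets rather than along a defining net, but the content is identical up to a sign convention). The paper also records a second, direct proof from Theorem~\ref{thm:subadd} that bypasses metric functionals entirely; and note that the corollary as stated does not assert measurability of $\omega\mapsto f^\omega$, so your final paragraph, while reasonable, is not needed.
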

This generalizes the main theorem in Kohlberg-Neyman~\cite{KN81}, dealing
with $u(n,\omega)=A^{n}$ and $D$ convex (the latter condition was removed
in~\cite{PR83}). For the random setting previous results can be found
in~\cite{K04}. When $X$ is strictly convex and reflexive, the conclusion
implies weak convergence of $u(n,\omega)x/n$. When the norm of the dual of
$X$ is Fr\'echet differentiable, the conclusion implies norm convergence of
$u(n,\omega)x/n$. In general however the above statement is best possible in
view of a counterexample in~\cite{KN81}.

\subsection{Applications}
\label{subsec:applications}

In this paragraph, we describe briefly different settings where our results
apply. More involved applications are described in
Sections~\ref{sec:Cocylces-of-bounded}, \ref{sec:Cocycles-of-holomorphic}
and~\ref{sec:Behaviour-of-extremal}.

\medskip

Applied mathematics provides a wealth of examples of non-expansive mappings
of Banach spaces, especially $\ell^{\infty}$, for example in dynamical
programming and topical matrix multiplication (homogeneous-order-preserving),
generalizing matrices in the max-plus or min-plus (tropical) semi-ring. In
finite dimensions the existence of Lyapunov exponents has been studied, in
particular what could then be called a tropical Oseledets multiplicative
ergodic theorem. We refer to~\cite{CT80,Co88,Ma97,Gu03}. Our
Corollary~\ref{cor:Banach} implies some known and some not previously known
statements in this setting.

More precisely, let $S$ be a set and consider the Banach space $B(S)$ of
bounded real-valued functions $f:S\rightarrow\mathbb{R}$ with the sup-norm.
Consider a map $A:B(S)\rightarrow B(S)$ (not necessarily linear) with the
properties:
\begin{itemize}
\item (monotonicity) If $f(x)\leq g(x)$ for all $x$, then
    $(Af)(x)\leq(Ag)(x)$ for all $x$
\item (semi-homogeneity) for any positive constant $a$, it holds that
\[
A(f(\cdot)+a)(x)\leq Af(x)+a.
\]
\end{itemize}
Blackwell observed in 1965 that such maps $A$ are semi-contractive. He had a
constant $0<\beta<1$ in the second condition which corresponds to discounting
in financial settings. When $S=\left\{ 1,2,\dotsc,d\right\} $ these operators, which now can be viewed as functions
$A:\mathbb{R}^{d}\rightarrow\mathbb{R}^{d}$, and with equality in the second
condition, are sometimes called \emph{topical functions} (see~\cite{Gu03}).

A multiplicative variant of this type of maps are self-mappings of cones
$A:C\rightarrow C$ with $A(ax)=aA(x)$ and $x\leq y$ implies $Ax\leq Ay$ with
respect to the cone partial order. Such maps are semi-contractive in
Hilbert's metric and its variants.

Our theorems apply to random products of such mappings, and information about
metric functionals can be found in~\cite{Wa08}.

\medskip

Consider now the random ergodic set-up, first considered in particular by
von~Neumann-Ulam, see~\cite{Ka52}. Let $L_{\omega}$ be a collection of
measure-preserving transformations of a probability space $(Y,\nu)$, indexed
by $\omega \in \Omega$. Let $T:\Omega\to \Omega$ be an ergodic
measure-preserving map. Given $v:Y\to \mathbb{R}$ and $y\in Y$, $\omega\in
\Omega$, we consider the average
\[
  \frac{1}{n}\sum_{k=0}^{n-1}v(L_{T^{k-1}\omega}L_{T^{k-2}\omega}\dotsm L_{\omega}y).
\]
Introduce an isometry $\varphi(\omega)$ of the space $X=L^2(Y,\nu)$ by
$(\varphi(\omega) w)(y)=v(y) + w(L_\omega y)=v +U_\omega w$. Let
$u(n,\omega)$ denote the corresponding multiplicative cocycle. Then the above
average equals $(u(n,\omega)0)(y)/n$. Hence, the following corollary (which
follows readily from Corollary~\ref{cor:Banach}) is a generalization of the
(random) mean ergodic theorem:
\begin{cor}
\label{cor:randommeanergodic}
Let $X$ be a Banach space and let $U$ be a
strongly measurable map from $\Omega$ to linear operators on $X$. Suppose
that $\norm{ U_\omega} \leq 1$ for every $\omega\in \Omega$. Then for every
$v\in X$ and a.e.\ $\omega$ there is a linear functional $F_\omega$ of $X$
with $\norm{ F_\omega} =1$ such that
\[
\lim_{n\rightarrow\infty}\frac{1}{n}F_\omega
  \left(\sum_{k=0}^{n-1}U_{\omega}U_{T\omega}\dotsm U_{T^{k-1}\omega}v\right)
=\lim_{n\rightarrow\infty}\frac{1}{n}\norm*{
  \sum_{k=0}^{n-1}U_{\omega}U_{T\omega}\dotsm U_{T^{k-1}\omega}v}.
\]

\end{cor}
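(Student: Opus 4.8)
The plan is to reduce the statement to Corollary~\ref{cor:Banach} via the device already indicated before the statement. Given the operator cocycle $U$ and the fixed vector $v\in X$, I would introduce the affine maps $\varphi(\omega)\colon X\to X$ defined by $\varphi(\omega)w=v+U_\omega w$. Since
\[
\norm{\varphi(\omega)w_1-\varphi(\omega)w_2}=\norm{U_\omega(w_1-w_2)}\le\norm{w_1-w_2},
\]
each $\varphi(\omega)$ is a semi-contraction of $D=X$, so $u(n,\omega)=\varphi(\omega)\varphi(T\omega)\dotsm\varphi(T^{n-1}\omega)$ is an ergodic cocycle of semi-contractions of a Banach space in the sense required by Corollary~\ref{cor:Banach}.

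Next I would check the two hypotheses of that corollary. Integrability is immediate at the basepoint $0$, since $\int_\Omega d(0,\varphi(\omega)0)\dd\mu=\int_\Omega\norm{v}\dd\mu=\norm{v}<\infty$. For the weak measurability of $u(n,\cdot)$ one uses that strong measurability of $\omega\mapsto U_\omega$ means $\omega\mapsto U_\omega w$ is measurable and essentially separably valued for every $w$; iterating the closed-span construction over a countable dense set of vectors produces a separable subspace $Z\ni v$ with $U_\omega Z\subset Z$ for a.e.\ $\omega$, and after replacing $X$ by $Z$ one gets that $\omega\mapsto U_\omega g(\omega)$ is measurable whenever $g\colon\Omega\to Z$ is measurable. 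Hence $\omega\mapsto u(n,\omega)x$ is measurable for every $x$ and $n$.

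Then I would identify the cocycle at the basepoint. By induction on $n$,
\[
u(n,\omega)0=\sum_{k=0}^{n-1}U_\omega U_{T\omega}\dotsm U_{T^{k-1}\omega}v,
\]
the $k=0$ term being $v$: indeed $u(n+1,\omega)0=\varphi(\omega)\bigl(u(n,T\omega)0\bigr)=v+U_\omega\sum_{k=0}^{n-1}U_{T\omega}\dotsm U_{T^{k}\omega}v$, which after reindexing is exactly the claimed sum at level $n+1$.

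Finally, I would apply Corollary~\ref{cor:Banach} to this cocycle with basepoint $x=0$: for a.e.\ $\omega$ there is a linear functional $f^\omega$ of norm $1$ such that $\lim_n\frac1n f^\omega(u(n,\omega)0)=\lim_n\frac1n\norm{u(n,\omega)0}$, both limits existing (the right-hand one by Kingman's theorem applied to the subadditive cocycle $n\mapsto d(0,u(n,\omega)0)=\norm{u(n,\omega)0}$). Setting $F_\omega=f^\omega$ and substituting the formula for $u(n,\omega)0$ yields precisely the asserted identity, depending of course on $v$ through the construction. The only genuinely technical point in the argument is the measurability reduction in the second step; everything else is formal bookkeeping.
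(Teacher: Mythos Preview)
Your approach is exactly the one the paper intends: define the affine semi-contractions $\varphi(\omega)w=v+U_\omega w$, identify $u(n,\omega)0$ with the partial sum $\sum_{k=0}^{n-1}U_\omega\dotsm U_{T^{k-1}\omega}v$, and invoke Corollary~\ref{cor:Banach}. The paper sketches precisely this reduction in the paragraph preceding the corollary and then simply says the result ``follows readily from Corollary~\ref{cor:Banach}''; your write-up merely supplies the bookkeeping (the induction for $u(n,\omega)0$ and the measurability reduction to a separable invariant subspace) that the paper leaves implicit.
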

Again we remark that when $X$ is strictly convex and reflexive, the
conclusion implies weak convergence of the ergodic average in question and
when the norm of the dual of $X$ is Fr\'echet differentiable, the conclusion
implies norm convergence. It is however known, due to Yosida~\cite{Ka52},
that in this situation if there is a weakly convergent subsequence then
strong convergence of the whole sequence follows. So $X$ being strictly
convex and reflexive would suffice. One of the most general results of this
type, with norm convergence, was obtained by Beck-Schwartz for reflexive
spaces in 1957. (There are many papers considering norm convergence of
similar or more general averages.) Note here that it is well-known that the
Carleman-von~Neumann mean ergodic theorem (i.e.\ with the norm convergence)
does not hold in general for Banach spaces. Our statement does hold, and
implies as said norm convergence under the conditions mentioned.

\medskip

As another application we show in Section~\ref{sec:Cocylces-of-bounded}:
\begin{thm}
\label{thm:operators} Let $v(n,\omega)=A(T^{n-1}\omega)A(T^{n-2}\omega)\dotsm
A(\omega)$ be an integrable ergodic cocycle of bounded invertible linear
operators of a Hilbert space. Denote the positive part
$\left[v(n,\omega)\right] \coloneqq
\left(v(n,\omega)^{*}v(n,\omega)\right)^{1/2}$.
Then for a.e.\ $\omega$ there is a norm 1
linear functional $F_{\omega}$ on the space of bounded linear operators such that
\[
\lim_{n\rightarrow\infty}\frac{1}{n}F_{\omega}\left(\log\left[v(n,\omega)\right]\right)=\lim_{n\rightarrow\infty}\frac{1}{n}\norm{
\log\left[v(n,\omega)\right]} .
\]

\end{thm}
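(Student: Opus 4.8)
The plan is to realize $v$ as a cocycle of semi-contractions of the symmetric space of positive operators, apply Theorem~\ref{thm:met}, and then turn the resulting metric functional into a genuine linear functional by passing to the exponential chart. Let $\mathcal{P}$ be the set of bounded positive operators on the Hilbert space that are invertible with bounded inverse, equipped with the Thompson metric $d(P,Q)=\norm{\log(P^{-1/2}QP^{-1/2})}$, and let $\mathcal{S}$ be the Banach space of bounded self-adjoint operators with the operator norm. We use three classical facts: (i) $\exp\colon\mathcal{S}\to\mathcal{P}$ is a bijection with inverse $\log$, and $d(I,\exp X)=\norm{X}$; (ii) every invertible $g$ acts isometrically on $(\mathcal{P},d)$ by $P\mapsto g^{*}Pg$, because $(g^{*}Pg)^{-1}(g^{*}Qg)=g^{-1}(P^{-1}Q)g$ is similar to $P^{-1}Q$ and $d$ depends only on the spectrum of $P^{-1}Q$; (iii) since $r=d(\exp X,Q)$ is characterized by $\exp(X-rI)\leq Q\leq\exp(X+rI)$ and $\log$ is operator monotone, one obtains the exponential metric increasing inequality $d(\exp X,Q)\geq\norm{X-\log Q}$ for all $X\in\mathcal{S}$, $Q\in\mathcal{P}$. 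Put $\varphi(\omega)\colon P\mapsto A(\omega)^{*}PA(\omega)$, an isometry (hence a semi-contraction) of $\mathcal{P}$, the weak measurability needed for the cocycle~\eqref{eq:def_u} being inherited from the norm measurability of $A$. A telescoping computation gives $u(n,\omega)I=v(n,\omega)^{*}v(n,\omega)=[v(n,\omega)]^{2}$, so $d(I,u(n,\omega)I)=\norm{\log[v(n,\omega)]^{2}}=2\norm{\log[v(n,\omega)]}$; the integrability hypothesis on $v$ says that $\omega\mapsto\norm{\log[A(\omega)]}$ is integrable, hence so is this subadditive cocycle, and Kingman's theorem yields $\frac{1}{n}\norm{\log[v(n,\omega)]}\to B$ almost surely for some $B\geq0$, which is the common value of the two limits in the statement.

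Applying Theorem~\ref{thm:met} to $(\mathcal{P},d)$ with base point $I$ produces, for a.e.\ $\omega$, a metric functional $h^{\omega}$ of $\mathcal{P}$ with $-\frac{1}{n}h^{\omega}(u(n,\omega)I)\to 2B$. Write $h^{\omega}=\lim_{k}h_{P_{k}}$ for some net $P_{k}\in\mathcal{P}$. By compactness of the metric-functional compactification of $\mathcal{S}$ we may pass to a subnet along which the maps $X\mapsto\norm{X-\log P_{k}}-\norm{\log P_{k}}$ converge pointwise to a metric functional $\ell$ of the Banach space $\mathcal{S}$; then fact (iii) gives
\[
  h^{\omega}(\exp X)=\lim_{k}\bigl(d(\exp X,P_{k})-d(I,P_{k})\bigr)\ \geq\ \lim_{k}\bigl(\norm{X-\log P_{k}}-\norm{\log P_{k}}\bigr)=\ell(X)
\]
for every $X\in\mathcal{S}$. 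As a pointwise limit of convex $1$-Lipschitz functions vanishing at $0$, the function $\ell$ is convex, $1$-Lipschitz and $\ell(0)=0$, hence has a subgradient $f$ at the origin: a linear functional with $\norm{f}\leq1$ and $\ell\geq f$ on all of $\mathcal{S}$. Therefore $h^{\omega}(\exp X)\geq f(X)$ for all $X\in\mathcal{S}$.

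Let $F_{\omega}$ be a norm-preserving Hahn-Banach extension of $-f$ to the space of all bounded operators, so $\norm{F_{\omega}}\leq1$. Taking $X=2\log[v(n,\omega)]$, so that $\exp X=u(n,\omega)I$, the inequality $h^{\omega}(u(n,\omega)I)\geq 2f(\log[v(n,\omega)])$ rearranges to $-\frac{1}{n}h^{\omega}(u(n,\omega)I)\leq\frac{2}{n}F_{\omega}(\log[v(n,\omega)])$, so letting $n\to\infty$ gives $\liminf_{n}\frac{1}{n}F_{\omega}(\log[v(n,\omega)])\geq B$. Since also $\frac{1}{n}F_{\omega}(\log[v(n,\omega)])\leq\frac{1}{n}\norm{\log[v(n,\omega)]}\to B$, we conclude $\frac{1}{n}F_{\omega}(\log[v(n,\omega)])\to B=\lim_{n}\frac{1}{n}\norm{\log[v(n,\omega)]}$. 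When $B>0$ this forces $\norm{F_{\omega}}=1$; when $B=0$ the asserted identity reads $0=0$ and any norm-one functional serves.

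The crux is the passage from a metric functional on the non-locally-compact, non-linear space $\mathcal{P}$ to an honest norm-one linear functional on operators. This is where one needs the exponential chart together with the operator monotonicity of $\log$ — precisely the input that makes the exponential metric increasing inequality available — followed by the convex-subgradient step, that is, the same Hahn-Banach mechanism that promotes Theorem~\ref{thm:met} to Corollary~\ref{cor:Banach}, now applied with $\mathcal{P}$ in place of a Banach space. Everything else is the standard dictionary between invertible operators and the cone of positive operators with its Thompson/Finsler metric.
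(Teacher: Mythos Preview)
Your proof is correct and follows essentially the same route as the paper: the isometric action of invertible operators on the cone $\mathcal{P}$, the fact that $\log$ is a contraction from $(\mathcal{P},d)$ to $(\mathcal{S},\norm{\cdot})$ (your operator-monotonicity derivation of EMI plays the role of the paper's appeal to Segal's inequality), and then the Hahn--Banach step from Corollary~\ref{cor:Banach}. The only organizational difference is that the paper applies Theorem~\ref{thm:subadd} directly and reruns the proof of Corollary~\ref{cor:Banach} in the image $\log p_n\in\mathcal{S}$, whereas you first invoke Theorem~\ref{thm:met} on $\mathcal{P}$ and then transport the resulting metric functional down to $\mathcal{S}$ via EMI before taking a subgradient; this is the same mechanism, packaged slightly differently.
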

This general statement can under further assumptions be promoted to yield
some known theorems. The same remarks as after Corollary~\ref{cor:Banach}
apply. For example if the Hilbert space has finite dimension, then $\Sym$ can
be given a Hilbert space structure and it follows that
\[
  \frac{1}{n}\log\left[v(n,\omega)\right]
\]
converges. This is known to be equivalent to Oseledets' multiplicative
ergodic theorem, as for example explained in~\cite{R82}. In the infinite
dimensional case and restricting to identity plus Hilbert-Schmidt operators, one
gets a uniform convergence, as observed in~\cite{KM99}, stronger than that
in~\cite{R82} for compact operators. Note that in general the conclusion of
the standard multiplicative ergodic theorem is too strong: if $A$ is a
bounded linear operator on a Hilbert space and $v$ is a nonzero vector, it is
not true in general that
\[
  \frac{1}{n}\log\norm{ A^{n}v} ,
\]
converges as $n\rightarrow\infty$. See~\cite{LL10, Sc06} for comparison.

In Section~\ref{sec:Cocycles-of-holomorphic} we exemplify a consequence of
Theorem~\ref{thm:met} in the complex analytic setting, and yet another
application in Section~\ref{sec:Behaviour-of-extremal}. A further possibility
would be to consider random products of diffeomorphisms of a compact
manifold, and exploit the induced isometric action on the space of Riemannian
metrics. This space has been studied somewhat, it is known to be CAT(0) but
not complete. For two or more generic diffeomorphisms there is no joint
invariant measure on the manifold, so Oseledets' theorem does not apply, but
our Theorem~\ref{thm:met} applies.

\section{Subadditive ergodic cocycles}
\label{sec:Subadditive-ergodic-cocycles}

In this section, we prove Theorem~\ref{thm:subadd}. By passing to the natural
extension if necessary, one can assume without loss of generality that $T$ is
invertible.

One of the inequalities in~\eqref{eq:subadd} is known. Indeed, by definition
$a(n,\omega)\leq a(n-\ell,T^{\ell}\omega)+a(\ell,\omega)$ and by Kingman's
theorem $a(\ell,\omega)=A\ell+o(1)$. This means that\[ a(n,\omega)\leq
a(n-\ell,T^{\ell}\omega)+A\ell+\ell\delta_{\ell}\] where
$\delta_{\ell}\rightarrow0$.

The other inequality is much more subtle. It says intuitively that at certain
times the cocycle is nearly additive. Note that Theorem~\ref{thm:subadd} for
additive cocycles is equivalent to Birkhoff's ergodic theorem.

We begin by defining \[ b(n,\omega)=a(n,T^{-n}\omega).\] This is again a
subadditive cocycle but for the transformation $\tau\coloneqq T^{-1}$ and
with the same asymptotic average as $a$. The interest in $b$ comes from the
fact  that
\[
  a(n,\omega)-a(n-\ell,T^{\ell}\omega)=b(n,\omega')-b(n-\ell,\omega'),
\]
where $\omega'=T^{n}\omega$. Note that, on the right-hand side, the point
$\omega'$ is the same in both terms. We start by showing that
$b(n,\omega)-b(n-\ell,\omega)$ behaves well for the majority of times $n$ and
for $\ell\leq n$ large enough, by a combinatorial argument related to several
proofs of Kingman's theorem (for example as in Steele~\cite{S89}). This is
the key point of the proof, given in Lemma~\ref{lem:crucial}. In contrast
to~\cite{KM99}, we will control the majority of times and not just a small
set of times. This is central to keep a good control once one changes
variables back from $b$ to $a$.

We begin by proving the following lemma as a warm-up. These arguments will be
used again in a more elaborate form in the proof of Lemma~\ref{lem:crucial}.
\begin{lem}
\label{lem:warm-up}Let $b$ be an integrable ergodic subadditive cocycle with
finite asymptotic average $A$. Let $\delta>0$. Then there exists $c>0$ such
that for almost every~$\omega$
\[
\uppdensity\left\{ n\in\mathbb{N}\st \exists\ell\in\left[1,n\right],b(n,\omega)-b(n-\ell,\omega)\leq(A-c)\ell\right\} \leq\delta.
\]
\end{lem}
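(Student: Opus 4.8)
The plan is to extract the statement from a near-minimization argument for the ergodic averages of the cocycle $b$. Recall that $b$ is subadditive with $\int b(n,\omega)\dd\mu \sim An$ and, by Kingman, $b(n,\omega)/n \to A$ a.s. Fix $\delta>0$. The key preliminary reduction is to choose, for some large $N$ to be determined, a \emph{skeleton} of the cocycle: since $b(N,\omega)/N$ is integrable and converges to $A$, we have $\frac1N\int b(N,\omega)\dd\mu = A + o(1)$ as $N\to\infty$. Consider the function $g(\omega) \coloneqq b(N,\omega)$, and look at the Birkhoff sums of $g$ along the orbit of $\tau = T^{-1}$ at scale $N$, i.e.\ the sums $\sum_{j=0}^{k-1} g(\tau^{jN}\omega)$. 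These sums overestimate $b(kN,\omega)$ by subadditivity, and both grow like $AkN$; the point is that $b$ cannot dip far below its own Birkhoff-type skeleton too often.

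Concretely, here is the mechanism I would use. For a contradiction-free direct argument, define for fixed $c>0$ the bad set
\[
U_c(\omega) = \bigl\{ n\in\mathbb{N} \st \exists \ell\in[1,n],\ b(n,\omega)-b(n-\ell,\omega)\leq (A-c)\ell \bigr\}.
\]
If $n\in U_c(\omega)$ with witness $\ell$, then on the block $(n-\ell,n]$ the increment of $b$ is at most $(A-c)\ell$, i.e.\ $b$ makes an ``abnormally cheap'' jump there. Greedily selecting maximal disjoint such blocks covering a $\uppdensity$-positive fraction of $[0,N']$ for large $N'$, and filling the gaps using the trivial bound $b(n,\omega)-b(n-\ell,\omega)\le a(\ell,T^{n-\ell}\omega)$, which after integration is $\le A\ell + o(\ell)$ on average, one gets
\[
\frac{1}{N'} b(N',\omega) \le A - c\cdot(\text{density of bad blocks}) + o(1).
\]
If the bad density exceeded $\delta$ on a positive-measure set of $\omega$, integrating and letting $N'\to\infty$ would force $\liminf \frac1{N'}\int b(N',\omega)\dd\mu \le A - c\delta < A$, contradicting $\int b(N',\omega)\dd\mu / N' \to A$. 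Choosing $c = c(\delta)$ small enough (any $c>0$ works once we argue a.s.\ rather than just in measure; the uniform $c$ comes from the integrated inequality) yields the claim. The quantifier in the lemma — "there exists $c>0$" — is exactly what lets me fix $c$ first and then run this counting estimate.

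The technical heart, and the step I expect to be the main obstacle, is the greedy covering/gap-filling bookkeeping: one must handle overlapping witness blocks (a longer cheap block may contain shorter ones), ensure the ``filler'' estimate on the complementary times is genuinely $A\ell + o(\ell)$ uniformly enough (this uses Kingman applied to $a$, transported back via $b(n,\omega)-b(n-\ell,\omega) = a(\ell,\tau^{?}\cdot)$-type identities together with an $L^1$ or maximal-inequality control so that the $o(\ell)$ error does not accumulate), and convert an ``in measure / on average'' statement into an almost-everywhere one with a \emph{deterministic} threshold $c$. Passing from average to a.s.\ is where I would either invoke a maximal ergodic inequality for the skeleton sums, or alternatively phrase the whole argument at a fixed large scale $N$ and use ergodicity (Birkhoff for $g=b(N,\cdot)$ under $\tau^N$, or rather under $\tau$) to make the empirical block-density converge a.s.\ to its expectation — this is the cleaner route and is presumably what the authors do, with Lemma~\ref{lem:warm-up} serving precisely as the ``warm-up'' before the more delicate $\ell$-uniform version in Lemma~\ref{lem:crucial}.
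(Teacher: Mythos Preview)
Your proposal has the right skeleton --- a greedy block decomposition of $[0,N']$ into ``bad'' intervals on which $b$ drops by at least $(A-c)\ell$ and complementary ``good'' intervals, followed by a comparison of the resulting upper bound on $b(N',\omega)$ with the Kingman asymptotic $b(N',\omega)\sim AN'$. This is exactly the scheme the paper uses. But you have the role of $c$ backwards, and that is not a cosmetic slip: it is the whole mechanism of the warm-up lemma.

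You write ``choosing $c=c(\delta)$ small enough'' and, parenthetically, ``any $c>0$ works once we argue a.s.''. Both are false: for small $c$ the set $U_c(\omega)$ has no reason to have small density, since nothing prevents $b(n,\omega)-b(n-\ell,\omega)$ from sitting slightly below $A\ell$ for a positive proportion of times $n$. The reason your argument appears to give ``any $c$'' is that you are claiming a filler estimate $b(n,\omega)-b(n-\ell,\omega)\le A\ell+o(\ell)$ on the good blocks. This is precisely the step you cannot justify: the $o(\ell)$ coming from Kingman is attached to the shifted point at which the block starts, is not uniform in that shift, and is meaningless when good blocks are short. Your own ``main obstacle'' paragraph flags this gap but does not close it; the integration-and-contradiction step that follows rests on an unproven $o(1)$.

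The paper sidesteps the problem by \emph{not} asking for a sharp filler. After normalising to $A=0$, it takes good blocks of length~$1$ and bounds each increment crudely by $b^+(1,\tau^{j}\omega)$; Birkhoff for $b^+(1,\cdot)$ then gives $\sum_{j<N}b^+(1,\tau^{j}\omega)\le MN$ with $M=1+\int b^+(1,\cdot)$. Combined with $b(N,\omega)\ge -N$ eventually (again Kingman) and the bad-block contribution $\le -c\sum_{n_i\in V}(n_i-n_{i+1})$, one reads off $\Card{V\cap[0,N-1]}\le (M+1)N/c$, and then takes $c$ \emph{large} so that $(M+1)/c\le\delta$. The crudeness of the filler is exactly compensated by the freedom to make $c$ large. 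The delicate version with an arbitrarily small $\epsilon$ in place of $c$ is Lemma~\ref{lem:crucial}, and \emph{that} is where a skeleton at a large time scale $s$ with $b(s,\cdot)$ enters --- your opening paragraph is anticipating the wrong lemma.
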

\begin{proof}
By replacing if necessary $b(n,\omega)$ by $b(n,\omega)-An$, one may without
loss of generality assume that $A=0$. We denote by $\tau$ the underlying
ergodic transformation. Let $V=\{n\in\mathbb{N}\st \exists\ell\in[1,n],\
b(n,\omega)-b(n-\ell,\omega)\leq-c\ell\}$. When $c$ is large enough we would
like to conclude that the density of $V$ is small.

Fix $N>0$. We will partition $[0,N]$ using the following algorithm. First let
$n_{0}=N$. Assuming $n_{i}$ is defined, we proceed as follows. If
$n_{i}\notin V$, then take $n_{i+1}=n_{i}-1$. If $n_{i}\in V$, then let
$\ell_{i}\in[1,n_{i}]$ be as in the definition of $V$, and let
$n_{i+1}=n_{i}-\ell_{i}$. We stop when $n_{i}=0$.

We have thus decomposed the interval $[0,N)$ in a union of intervals
$[n_{i}-1,n_{i})$ (with $n_{i}\notin V$), and $[n_{i}-\ell_{i},n_{i})$ (with
$n_{i}\in V$). Using the subadditivity along the intervals of the first type
one gets:
\[
b(N,\omega)\leq\sum_{n_{i}\notin
V}b(1,\tau^{n_{i+1}}\omega)+\sum_{n_{i}\in V}(b(n_{i},\omega)-b(n_{i+1},\omega)).
\]
Almost surely, $b(N,\omega)=o(N)$ when $N$ tends to infinity. In particular,
from a certain point onward, one has $b(N,\omega)\geq-N$. In the expression
on the right, we majorize the sum over $n_{i}\notin V$ by
$\sum_{j=0}^{N-1}b^{+}(1,\tau^{j}\omega)$ (where $b^{+}$ is the positive part
of $b$), which itself is bounded by $MN$ if $N$ is sufficiently large, where
we have set $M=1+\int b^{+}(1,\omega)$. Using the definition of $V$, we
obtain:
\[
  -N\leq MN -c\sum_{n_{i}\in V}(n_{i}-n_{i+1}).
\]
Since $V\cap[0,N-1]$ is included in the union of the intervals
$(n_{i+1},n_{i}]$ for $n_{i}\in V$, its cardinality $\Card{V\cap[0,N-1]}$ is
bounded by $\sum(n_{i}-n_{i+1})$. Therefore, the previous inequality gives
\[
  \Card{V\cap[0,N-1]}\leq(M+1)N/c.
\]
This finishes the proof, taking $c$ sufficiently large so that
$(M+1)/c\leq\delta$.
\end{proof}
In the following lemma, we replace $c$ (large) of Lemma~\ref{lem:warm-up} by
a parameter $\epsilon$ which is arbitrarily small. The price to pay in order
to preserve a valid statement is to restrict it to sufficiently large $\ell$.
This lemma plays a crucial role in the proof of Theorem~\ref{thm:subadd}.
\begin{lem}
\label{lem:crucial}Let $b$ be an integrable ergodic subadditive cocycle with
finite asymptotic average $A$. Let $\epsilon>0$ and $\delta>0$. There exists
$k\geq 1$ such that for almost every $\omega$
\[
\uppdensity\{n\in\mathbb{N}\st \exists\ell\in[k,n],\
b(n,\omega)-b(n-\ell,\omega)\leq(A-\epsilon)\ell\}\leq\delta.
\]
\end{lem}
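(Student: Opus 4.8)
\emph{Reduction.} Subtracting the linear drift, i.e.\ replacing $b(n,\omega)$ by $b(n,\omega)-An$, one may assume $A=0$; the claim then is that for a suitable $k$ the set $V\coloneqq V^{k}_{\epsilon}=\{n\st\exists\ell\in[k,n],\ b(n,\omega)-b(n-\ell,\omega)\le-\epsilon\ell\}$ satisfies $\uppdensity V\le\delta$ for a.e.\ $\omega$. I would fix a full–measure set of $\omega$ on which Kingman's theorem applies (so $b(n,\omega)/n\to0$, hence $\abs{b(n,\omega)}\le\gamma n$ for all large $n$, for any preassigned $\gamma$) and on which the Birkhoff averages of the finitely many integrable functions encountered below converge, and then argue by contradiction, supposing $\Card{V\cap[0,N)}>\delta N$ for infinitely many $N$.

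\emph{A Vitali covering adapted to $V$.} For each $n\in V$ fix a witness interval $I_{n}=(n-\ell_{n},n]$ with $\ell_{n}\ge k$ and $b(n,\omega)-b(n-\ell_{n},\omega)\le-\epsilon\ell_{n}$, and extract a disjoint subfamily greedily, always choosing the interval with the largest right endpoint among those not yet met; list the chosen ones in increasing spatial order as $(u_{1},v_{1}],\dots,(u_{p},v_{p}]$. Because the selection proceeds by \emph{decreasing} right endpoints, every $n\in V\cap[0,N)$ ends up inside a selected interval (when $I_{n}$ is discarded it is because it meets an already chosen interval whose right endpoint is $\ge n$, and such an interval contains $n$); hence $\sum_{s}(v_{s}-u_{s})\ge\Card{V\cap[0,N)}>\delta N$, each $v_{s}-u_{s}\ge k$, and the complementary ``gaps'' $(v_{s},u_{s+1}]$, together with $[0,u_{1})$ and $(v_{p},N]$, contain no point of $V$. (This is the same greedy decomposition as in Lemma~\ref{lem:warm-up}, reorganised so that no point of $V$ is skipped.)

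\emph{Telescoping.} On the selected intervals $b(v_{s},\omega)-b(u_{s},\omega)\le-\epsilon(v_{s}-u_{s})$, while subadditivity gives $b(u_{s+1},\omega)-b(v_{s},\omega)\le b(u_{s+1}-v_{s},\tau^{v_{s}}\omega)$ on the gaps. Summing all the increments from $u_{1}$ to $v_{p}$ and using $\abs{b(u_{1},\omega)},\abs{b(v_{p},\omega)}\le\gamma N$ for $N$ large would give
\[
  \epsilon\sum_{s}(v_{s}-u_{s})\ \le\ 2\gamma N\ +\ \sum_{\text{gaps}}b\big(\text{length},\,\tau^{\text{base}}\omega\big).
\]
With $\gamma$ chosen small, the whole matter reduces to showing that the gap sum is at most, say, $\tfrac12\epsilon\delta N$, which contradicts $\sum_{s}(v_{s}-u_{s})>\delta N$.

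\emph{Controlling the gaps — the crux.} Here Kingman's theorem is used a second time, in the quantitative form behind Steele's proof: fixing $\epsilon'\ll\epsilon\delta$, the functions $f_{j}(\omega')\coloneqq\sup_{\ell\ge j}(b(\ell,\omega')-\epsilon'\ell)^{+}$ decrease to $0$ a.e.\ as $j\to\infty$. On a gap of length $\ge k$ one then has $b(\text{length},\tau^{\text{base}}\omega)\le\epsilon'\cdot\text{length}+f_{k}(\tau^{\text{base}}\omega)$, so, since the gap bases are distinct, the total contribution of these gaps is at most $\epsilon'N+\sum_{j<N}f_{k}(\tau^{j}\omega)$; the gaps of length $<k$ are handled crudely via $b\le b^{+}$ and the subadditivity of $b^{+}$, and the $V$-freeness of the gaps — which forces $n\mapsto b(n,\omega)+\epsilon n$ to increase along every step-$k$ progression inside a gap — is what keeps these short pieces from proliferating. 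To close one needs $\sum_{j<N}f_{k}(\tau^{j}\omega)=o(N)$ with small implied constant, together with the negligibility of the short-gap contribution; both force $k$ to be large, which is exactly the role of $k$ in the statement, and this is the step I expect to be the real obstacle. A naive bound of the short gaps and of the large part of $f_{k}$ reintroduces $\int b^{+}(1,\omega)\dd\mu$ integrated over a set whose density one must make small, and this has to be reconciled with the (a priori uncontrolled) speed at which $f_{j}\downarrow0$: one does so either by a maximal inequality putting some $f_{k_{0}}$ in $L^{1}$ (so $\int f_{k}\downarrow0$ by dominated convergence), or by the Egorov bookkeeping of Lemma~\ref{lem:warm-up}, where the exceptional set is first chosen small enough to absorb the $b^{+}(1,\cdot)$-mass and only afterwards is $k$ produced.
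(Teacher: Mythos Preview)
Your reduction, greedy covering, and telescoping are all sound, and the inequality $\epsilon\sum_s(v_s-u_s)\leq 2\gamma N+\sum_{\text{gaps}}b(\text{gap length},\tau^{\text{base}}\omega)$ is correctly derived. The genuine gap is where you flag it: bounding that gap sum by a small multiple of $N$. Your maximal-function route requires $\int f_k\to 0$, hence some $f_{k_0}\in L^1$, and the hypotheses do not provide this. Already for an additive cocycle $b(n,\omega)=\sum_{i<n}g(\tau^i\omega)$ with $g\in L^1$ and $\int g=0$, the function $f_1$ is the supremum of a random walk with drift $-\epsilon'$; this is finite a.e.\ but in the i.i.d.\ case is integrable only under a second-moment condition on $g^+$ (Kiefer--Wolfowitz), which is not assumed here. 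Without $f_k\in L^1$, the Birkhoff averages $\tfrac1N\sum_{j<N}f_k(\tau^j\omega)$ can diverge. The Egorov variant hits the same wall: on the exceptional set you still only have $b(m,\cdot)\leq\sum_{i<m}b^+(1,\tau^i\cdot)$ over a gap of uncontrolled length. And the $V$-freeness of gaps yields \emph{lower} bounds on $b(u_{s+1})-b(v_s)$ (namely $>-\epsilon\cdot(\text{gap length})$ when the gap is long), not the upper bounds the telescope needs, so it does not curb the short-gap contribution in any way I can see.

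The paper's device, which your sketch is missing, is to use the $L^1$ part of Kingman's theorem rather than any maximal function: fix a large $s$ with $\int b^+(s,\omega)<\rho s$, round $n$ and $n-\ell$ to the nearest points of a grid $s\mathbb{N}+t$, and run the greedy argument of Lemma~\ref{lem:warm-up} on that grid. Each non-selected step then costs $b^+(s,\cdot)$, whose Birkhoff sum along the grid is at most about $(N/s)\int b^+(s)<\rho N$; rounding errors are absorbed by a fixed integrable function $F$ evaluated only at the $k$-separated selected endpoints (hence at $\leq N/k$ points) and made small by truncating $F$ at a large level $M$ and summing the tail $F\cdot 1_{F\geq M}$ along the whole grid. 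This replaces the potentially non-integrable object $f_k$ by the genuinely small $L^1$ quantity $\int b^+(s)/s$.
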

\begin{proof}
Without loss of generality we may assume that $A=0$. We denote by $\tau$ the
underlying ergodic transformation. Going to the natural extension if
necessary, we can assume that $\tau$ is invertible.

The idea of the proof is that the argument we used to prove
Lemma~\ref{lem:warm-up} would work in our situation if $\int b^+(1,\omega)$
were small enough. This is not the case in general, but this is
asymptotically true for the iterates of the cocycle, by Kingman's theorem: if
$s$ is large enough, then $\int b^+(s,\omega)/s$ is very small. We will fix
such an $s$, discretize time to work in $s\mathbb{N}$, and follow the proof
of Lemma~\ref{lem:warm-up} in this set. Additional errors show up in the
approximation process, but they are negligible if $k$ in the statement of the
lemma is large enough. If one is to do this precisely, there is a problem
that $\tau^{s}$ is in general not necessarily ergodic. This issue is resolved
by working with times in the set $s\mathbb{N}+t$ for fixed $t\in [0, s-1]$.

Let us start the rigorous argument. Fix $\rho>0$, which corresponds to the
precision we want to achieve (this value will be chosen at the end of the
proof). Since $b(s,\omega)/s$ tends to $0$ almost everywhere and in $L^{1}$
when $s$ tends to infinity by Kingman's theorem, the same holds for $b^{+}$.
One can thus take $s\in\mathbb{N}$ such that $\int b^{+}(s,\omega)<\rho s$.
We also fix $t\in[0,s-1]$. Let $K=s\mathbb{N}+t$ be the set of reference
times we will use in the following. Once all these data are fixed, we take a
large enough $k$.

The set of bad times, whose density we want to majorize, can be decomposed as
$U\cup V$, where
\begin{align*}
U & =\{n\in\mathbb{N}\st \exists\ell\in(n-s,n],\ b(n,\omega)-b(n-\ell,\omega)\leq-\epsilon\ell\},\\
V & =\{n\in\mathbb{N}\st \exists\ell\in[k,n-s],\ b(n,\omega)-b(n-\ell,\omega)\leq-\epsilon\ell\}.
\end{align*}
If $n\in U$, then there exists $i\in[0,s)$ such that $b(n,\omega)\leq
b(i,\omega)-\epsilon(n-i)$. Since $b(n,\omega)/n\rightarrow0$ almost
everywhere, we deduce that $U$ is almost surely finite. It suffices therefore
to estimate the density of $V$.

Consider $n\in V$ and $\ell\in[k,n-s]$ such that
$b(n,\omega)-b(n-\ell,\omega)\leq-\epsilon\ell$. We will approximate such an
$n$ by a time in $K$. Let $\tilde{n}$ be the successor of $n$ in $K$, that is
the smallest time in $K$ with $\tilde{n}\geq n$. We write $\tilde{n}=n+i$
with $i<s$. Thus, \[ b(\tilde{n},\omega)=b(n+i,\omega)\leq
b(n,\omega)+b(i,\tau^{n}\omega)\leq b(n,\omega)+F(\tau^{\tilde{n}}\omega),\]
where we set $F(\eta)=\sum_{j=-s}^{s}b^{+}(1,\tau^{j}\eta)$, which is
integrable and positive. Similarly, as $n-\ell\geq s$ by assumption, $n-\ell$
admits a predecessor $\tilde{n}-\tilde{\ell}$ in $K$. One has
$n-\ell=\tilde{n}-\tilde{\ell}+j$ for some $j<s$, and
\[
  b(n-\ell,\omega)=b(\tilde{n}-\tilde{\ell}+j,\omega)
  \leq
  b(\tilde{n}-\tilde{\ell},\omega)+b(j,\tau^{\tilde{n}-\tilde{\ell}}\omega)
  \leq
  b(\tilde{n}-\tilde{\ell},\omega)+F(\tau^{\tilde{n}-\tilde{\ell}}\omega).
\]
We obtain
finally that
\begin{align*}
 b(\tilde{n},\omega)-b(\tilde{n}-\tilde{\ell},\omega) & \leq b(n,\omega)+F(\tau^{\tilde{n}}\omega)-b(n-\ell,\omega)+F(\tau^{\tilde{n}-\tilde{\ell}}\omega)\\
 & \leq-\epsilon\ell+F(\tau^{\tilde{n}}\omega)+F(\tau^{\tilde{n}-\tilde{\ell}}\omega)\\
 & \leq-\epsilon\tilde{\ell}/2+F(\tau^{\tilde{n}}\omega)+F(\tau^{\tilde{n}-\tilde{\ell}}\omega),
\end{align*}
where the last inequality comes from the fact that $\tilde{\ell}\leq\ell+2s$
is bounded by $2\ell$ whenever $k$ is sufficiently large.

Denote by
\[
  W=\{\tilde{n}\in K\st \exists\tilde{\ell}\in s\mathbb{N}\cap[k,\tilde{n}],
    \ b(\tilde{n},\omega)-b(\tilde{n}-\tilde{\ell},\omega)\leq-\epsilon\tilde{\ell}/2+F(\tau^{\tilde{n}}\omega)+F(\tau^{\tilde{n}-\tilde{\ell}}\omega)\}.
\]
We have shown that
\begin{equation}
\label{controle_V}
  V\subset W+[-s+1,0].
\end{equation}

To estimate the density of $V$, it suffices hence to estimate the density of
$W$. Let $N$ be an integer, let $\tilde{N}=ps+t$ be its successor in $K$ (it
satisfies $\tilde{N}\leq2N$ if $N$ is sufficiently large). We decompose
$K\cap[0,\tilde{N}]$ as in Lemma~\ref{lem:warm-up}. We start with
$\tilde{n}_{0}=\tilde{N}$. If we have defined $\tilde{n}_{i}$, we define its
predecessor as follows. If $\tilde{n}_{i}\notin W$, we take
$\tilde{n}_{i+1}=\tilde{n}_{i}-s$. If $\tilde{n}_{i}\in W$, then let
$\tilde{\ell}_{i}\in s\mathbb{N}\cap[k,\tilde{n}_{i}]$ as in the definition
of $W$, and set $\tilde{n}_{i+1}=\tilde{n}_{i}-\tilde{\ell}_{i}$. We stop
when $\tilde{n}_{i}=t$.

We have thus decomposed $[0,\tilde{N})$ as a union of intervals of the form
$[\tilde{n}_{i}-s,\tilde{n}_{i})$ (with $\tilde{n}_{i}\notin W$), and
$[\tilde{n}_{i}-\tilde{\ell}_{i},\tilde{n}_{i})$ (with $\tilde{n}_{i}\in W$)
and $[0,t)$.

Using the subadditivity along the intervals of the first and the third types,
one gets:
\[ b(\tilde{N},\omega)\leq b(t,\omega)+\sum_{\tilde{n}_{i}\notin
W}b(s,\tau^{\tilde{n}_{i+1}}\omega)+\sum_{\tilde{n}_{i}\in
W}(b(\tilde{n}_{i},\omega)-b(\tilde{n}_{i+1},\omega)).
\]
Almost surely, $b(\tilde{N},\omega)=o(\tilde{N})$ when $\tilde{N}$ tends to
infinity. Hence, after a certain stage, we have
$b(\tilde{N},\omega)\geq-\rho\tilde{N}$. In the terms on the right above, we
majorize the sum over $\tilde{n}_{i}\notin W$ by
$\sum_{j=0}^{p-1}b^{+}(s,\tau^{js+t}\omega)$. The trivial term $b(t,\omega)$
is estimated by $F(\omega)$. Using the definition of $W$, we obtain:
\[
-\rho\tilde{N}\leq
F(\omega)+\sum_{j=0}^{p-1}b^{+}(s,\tau^{js+t}\omega)+\sum_{\tilde{n}_{i}\in
W}(-\epsilon(\tilde{n}_{i}-\tilde{n}_{i+1})/2+F(\tau^{\tilde{n}_{i}}\omega)+F(\tau^{\tilde{n}_{i+1}}\omega)).
\]
The set $W$ is included in the union of the intervals
$(\tilde{n}_{i+1},\tilde{n}_{i}]$ with $\tilde{n}_{i}\in W$, by construction.
The same holds for $V$ in view of~\eqref{controle_V}. Therefore,
$\Card{V\cap[0,N-1]}\leq\sum(\tilde{n}_{i}-\tilde{n}_{i+1})$. The previous
equation gives hence that:
\[
\epsilon\Card{V\cap[0,N-1]}/2\leq\rho\tilde{N}+F(\omega)+\sum_{j=0}^{p-1}b^{+}(s,\tau^{js+t}\omega)+\sum_{\tilde{n}_{i}\in
W}(F(\tau^{\tilde{n}_{i}}\omega)+F(\tau^{\tilde{n}_{i+1}}\omega)).
\]
The function $F$ is integrable, so there exists $M\in\mathbb{R}$ such that
the function $G=F\cdot1_{F\geq M}$ satisfies $\int G<s\rho$. We majorize $F$
by $M+G$. In the preceding equation, the $\tilde{n}_{i}$s in $W$ are
separated by at least $k$ because $\tilde{\ell}_{i}\geq k$ by definition.
Therefore, the number of such $\tilde n_i$ is at most $\tilde{N}/k$. We get
\begin{align*}
\sum_{\tilde{n}_{i}\in W}(F(\tau^{\tilde{n}_{i}}\omega)+F(\tau^{\tilde{n}_{i+1}}\omega)) &
 \leq2(\tilde{N}/k)M+\sum_{\tilde{n}_{i}\in W}(G(\tau^{\tilde{n}_{i}}\omega)+G(\tau^{\tilde{n}_{i+1}}\omega))
 \\
 & \leq\rho\tilde{N}+2\sum_{j=0}^{p}G(\tau^{js+t}\omega),
\end{align*}
whenever $k$ is sufficiently big.

Finally, if $k$ is sufficiently large, one has (using that $\tilde{N}\leq2N$)
\[
\epsilon\Card{V\cap[0,N-1]}/2\leq F(\omega)+4\rho
N+\sum_{j=0}^{p}H(\tau^{js+t}\omega),
\]
where $H(\eta)=b^{+}(s,\eta)+2G(\eta)$ has integral $<3s\rho$. Summing after
that over $t\in[0,s-1]$, we obtain:
\[
s\epsilon\Card{V\cap[0,N-1]}/2\leq sF(\omega)+4s\rho N+\sum_{i=0}^{N+s-1}H(\tau^{i}\omega).
\]
For almost every $\omega$, Birkhoff's theorem applied to the function $H$
gives $\sum_{i=0}^{N+s-1}H(\tau^{i}\omega)\leq 3s\rho N$ for $N$ sufficiently
large. Thus
\[
s\epsilon\Card{V\cap[0,N-1]}/2\leq sF(\omega)+7s\rho N.
\]
This shows that the density of $V$ is bounded by $14\rho/\epsilon$. This
concludes the proof if we choose $\rho=\epsilon\delta/14$ at the beginning of
the argument.
\end{proof}
We combine the two preceding lemmas in order to gain improving control over
time, as follows.

\begin{lem}
\label{lem:alldensities}Let $b$ be an integrable ergodic subadditive cocycle
with finite asymptotic average $A$. Let $\epsilon>0$. There exist a sequence
$\delta_\ell\to 0$, a subset $O$ of measure at least $1-\epsilon$ and for
$\omega\in O$, there is a sequence of bad times $U(\omega)$ with
$\Card{U(\omega)\cap[0,n-1]}\leq\epsilon n$ for every $n$, with the following
property. For every $\omega\in O$, for all $n$ not in $U(\omega)$, and for
every $\ell\in[1,n]$, it holds that
\begin{equation}
  b(n,\omega)-b(n-\ell,\omega)\geq(A-\delta_\ell)\ell.\label{eq_controle_a}
\end{equation}
\end{lem}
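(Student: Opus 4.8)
The plan is to combine Lemma~\ref{lem:warm-up} (which controls, with density $<\delta$, the bad times at which $b(n,\omega)-b(n-\ell,\omega) \leq (A-c)\ell$ for \emph{some} $\ell \in [1,n]$, with $c$ large) and Lemma~\ref{lem:crucial} (which does the same with $c$ replaced by an arbitrarily small $\epsilon$, at the cost of restricting to $\ell \geq k$) along a sequence of parameters tending to $0$, and then to intersect the resulting ``good'' sets. Concretely, I would fix a sequence $\epsilon_j \downarrow 0$ (say $\epsilon_j = 2^{-j}\epsilon$, with $\sum_j \epsilon_j \leq \epsilon$) and, for each $j$, apply Lemma~\ref{lem:crucial} with parameters $\epsilon_j$ and $\delta_j$ (also tending to $0$, e.g. $\delta_j = 2^{-j}\epsilon$) to obtain an integer $k_j$. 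We may take the $k_j$ strictly increasing. Applying Lemma~\ref{lem:warm-up} with $\delta_0$ gives a constant $c_0$ controlling the short scales $\ell \in [1,k_1]$. This produces, for each $j \geq 0$, a full-measure set on which the upper density of $V_j(\omega) := \{n : \exists \ell \in [k_j, n],\ b(n,\omega)-b(n-\ell,\omega) \leq (A-\epsilon_j)\ell\}$ (with $k_0 := 1$, $\epsilon_{-1}$-role played by $c_0$) is at most $\delta_j$.

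Next I would define the candidate sequence $\delta_\ell \to 0$: for $\ell \in [k_j, k_{j+1})$, set $\delta_\ell = \epsilon_j$ (and $\delta_\ell = c_0$ for $\ell \in [1,k_1)$), so indeed $\delta_\ell \to 0$ since $\epsilon_j \to 0$ and $k_j \to \infty$. Then the bad set should be $U(\omega) := \bigcup_{j \geq 0} V_j(\omega)$ together with the bad set from the warm-up lemma; I need to check that for $n \notin U(\omega)$ and any $\ell \in [1,n]$, letting $j$ be such that $\ell \in [k_j, k_{j+1})$, the failure of $n \in V_j(\omega)$ gives $b(n,\omega)-b(n-\ell,\omega) > (A-\epsilon_j)\ell = (A-\delta_\ell)\ell$, which is exactly~\eqref{eq_controle_a}. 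The upper density of $U(\omega)$ is then at most $\sum_j \delta_j + \delta_0 \leq 2\epsilon$ (rescale $\epsilon$ at the outset to absorb constants).

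The genuine subtlety is that an \emph{upper density} bound on each $V_j(\omega)$ is an asymptotic statement — it only says $\Card{V_j(\omega) \cap [0,n-1]} \leq \delta_j n$ for $n$ large, depending on $\omega$ — whereas the Lemma statement demands $\Card{U(\omega)\cap[0,n-1]} \leq \epsilon n$ for \emph{every} $n$. I would handle this by an Egorov/exhaustion argument: for each $j$ the convergence is uniform off a small-measure set, so one can find $N_j$ and a set $O_j$ of measure $\geq 1 - 2^{-j}\epsilon$ on which $\Card{V_j(\omega)\cap[0,n-1]} \leq \delta_j n$ holds for all $n \geq N_j$; then truncate the finitely many initial terms. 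The real obstacle is reconciling the countably many sets $V_j$ into a single uniform-in-$n$ bound: one must choose a \emph{single} threshold $N(\omega)$ working simultaneously for all $j$, which is delicate because there are infinitely many $j$. This is resolved by noting that for a fixed time horizon $n$, only the finitely many $j$ with $k_j \leq n$ contribute to $U(\omega)\cap[0,n-1]$; so the bound $\Card{U(\omega)\cap[0,n-1]} \leq \sum_{j : k_j \leq n} \delta_j n + (\text{warm-up term}) \leq 2\epsilon n$ is really a finite sum at each scale, and the tail $\sum_{j} \delta_j$ being summable is what makes it uniform. Setting $O = \bigcap_j O_j$ (of measure $\geq 1-2\epsilon$) and absorbing the finitely many exceptional initial times of each $\omega \in O$ into $U(\omega)$ — which costs only finitely many points and hence does not affect the density — finishes the argument.
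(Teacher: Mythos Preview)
Your strategy matches the paper's: combine Lemma~\ref{lem:warm-up} with Lemma~\ref{lem:crucial} along a summable sequence of parameters, define $\delta_\ell$ piecewise on the resulting intervals, and use an Egorov-type argument to uniformize the density thresholds. You also correctly flag the key subtlety, that the lemma demands $\Card{U(\omega)\cap[0,n-1]}\leq\epsilon n$ for \emph{every} $n$, not merely asymptotically.

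However, your resolution of that subtlety does not work. The estimate $\Card{V_j(\omega)\cap[0,n-1]}\leq\delta_j n$ is only available once $n\geq N_j$, and nothing in your construction forces $N_j\leq n$ whenever $k_j\leq n$: the $k_j$ come from Lemma~\ref{lem:crucial} \emph{before} Egorov produces $N_j$, so the two thresholds are unrelated. Your proposed fix, ``absorbing the finitely many exceptional initial times into $U(\omega)$'', goes in the wrong direction: adding points to $U(\omega)$ can only \emph{increase} $\Card{U(\omega)\cap[0,n-1]}$, so it cannot repair a violated cardinality bound, and throwing an initial segment $[0,N)$ into $U(\omega)$ gives $\Card{U(\omega)\cap[0,n-1]}=n>\epsilon n$ for all $n\leq N$. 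The paper's fix is to \emph{truncate} rather than enlarge: set $U(\omega)=\bigcup_i U_i(\omega)\cap[n_i,\infty)$, where the increasing $n_i\geq k_i$ are chosen (after Egorov) to dominate the thresholds at level $i$. Then $U(\omega)\subset[n_1,\infty)$, so the cardinality bound is vacuous for $n<n_1$, and for $n\in[n_i,n_{i+1})$ only the levels $j\leq i$ contribute, each with its density bound already in force since $n\geq n_j$. To make the inequality~\eqref{eq_controle_a} survive this truncation one must correspondingly set $\delta_\ell=c_i$ on $[n_i,n_{i+1})$ rather than on $[k_i,k_{i+1})$, so that $\ell\in[n_i,n_{i+1})$ automatically forces $n\geq\ell\geq n_i$ and hence $n\notin U_i(\omega)$; the finitely many remaining cases $n<n_1$ are handled by enlarging $\delta_\ell$ for small $\ell$ and shrinking $O$ slightly, not by enlarging $U(\omega)$.
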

\begin{proof}
For every $i>1$, set $c_{i}=2^{-i}$. In view of Lemma~\ref{lem:crucial},
there exists $k_{i}$ such that, for almost every $\omega$, the set
\[
U_{i}(\omega)=\{n\in\mathbb{N} \st \exists\ell\in[k_{i},n],\
b(n,\omega)-b(n-\ell,\omega)\leq(A-c_{i})\ell\}
\]
satisfies $\uppdensity\left\{ U_{i}(\omega)\right\} <\epsilon2^{-i}$. For $n$
large, say $n\geq n_{i}(\omega)$, one obtains
$\Card{U_{i}(\omega)\cap[0,n-1]}\leq\epsilon 2^{-i}n$. Since the function
$n_{i}(\omega)$ is almost everywhere finite, we may find a subset $O_{i}$ of
measure close to $1$, say $\mu(O_{i})>1-\epsilon2^{-i}$, on which
$n_{i}(\omega)\leq n_{i}$, for some integer $n_{i}$ (which one may choose to
be $>n_{i-1}$ and $\geq k_{i}$).

We treat the case $i=1$ separately and in a more crude manner, applying
Lemma~\ref{lem:warm-up}: there is a constant $c_{1}$ such that, for almost
every $\omega$,
\[
  \uppdensity\{n\in\mathbb{N}\st \exists\ell\in[1,n],\
  b(n,\omega)-b(n-\ell,\omega)\leq(A-c_{1})\ell\}< \epsilon/2.
\] We set
$k_{1}=1$. As above, we define hence $U_{1}(\omega)$, $n_{1}(\omega)$, and
$O_{1}$.

We set $\bar{O}=\bigcap_{i\geq1}O_{i}$, the good set on which things are well
controlled. It satisfies $\mu(\bar{O})>1-\epsilon$. For $\omega\in\bar{O}$ we
define a set of bad times $U(\omega)$ by \[
U(\omega)=\bigcup_{i\geq1}U_{i}(\omega)\cap[n_{i},+\infty).\]

We begin by showing that the bad set $U(\omega)$ satisfies
$\Card{U(\omega)\cap[0,n-1]}\leq\epsilon n$ for every $n$. Let
$n\in\mathbb{N}$. Let $i$ be such that $n_{i}\leq n<n_{i+1}$ (there is
nothing to do if $n<n_{1}$ since $U(\omega)\subset[n_{1},+\infty)$). Hence
\begin{align*}
\Card{U(\omega)\cap[0,n-1]} & \leq\Card*{\bigcup_{j\leq i}U_{j}(\omega)\cap[n_{j},+\infty)\cap[0,n-1]}
\leq\Card*{\bigcup_{j\leq i}U_{j}(\omega)\cap[0,n-1]}
\\
& \leq\sum_{j\leq i}\Card{U_{j}(\omega)\cap[0,n-1]}.
\end{align*}
Since $n\geq n_{j}$ for every $j\leq i$, the cardinality of
$U_{j}(\omega)\cap[0,n-1]$ is bounded by $\epsilon2^{-j}n$. Therefore the sum
is not greater than $\epsilon n$, as desired.

Set $I_{i}=[n_{i},n_{i+1})$ for $i>1$, and $I_{1}=[1,n_{2})$. We define a
sequence $\bar{\delta}_\ell=c_{i}$ for $\ell\in I_{i}$. This sequence tends
to $0$. We claim that it satisfies~\eqref{eq_controle_a} for every $n\geq
n_{1}$ which is not in $U(\omega)$. Indeed, fix $\ell\in[1,n]$, it belongs to
an interval $I_{i}$. We claim that $n\geq n_{i}$: This holds by assumption if
$i=1$, and if $i>1$ we have $n_{i}=\inf I_{i}\leq\ell\leq n$. As $n\geq
n_{i}$ and $n\notin U(\omega)$, we have $n\notin U_{i}(\omega)$. Moreover,
$\ell\geq k_{i}$: Indeed, if $i>1$, this follows from the inequalities
$\ell\geq n_{i}$ and $n_{i}\geq k_{i}$, while if $i=1$ this comes simply from
the fact that $k_{1}=1$. Thus, the definition of $U_{i}(\omega)$ ensures that
$b(n,\omega)-b(n-\ell,\omega)\geq(A-c_{i})\ell$, which gives the result since
$\bar{\delta}_\ell=c_{i}$.

This almost finishes the proof, it just remains to treat the case $n<n_{1}$
and $\ell\in[1,n]$. This is only a finite number of conditions. All the
functions we have considered are measurable, so they are almost everywhere
finite. There is therefore a subset $O$ of $\bar{O}$, again with
$\mu(O)>1-\epsilon$, and a constant $d$ such that for every $\omega\in O$,
and every $n<n_{1}$ and $\ell\in[1,n]$, we have
$b(n,\omega)-b(n-\ell,\omega)\geq(A-d)\ell$. Set finally
$\delta_\ell=\bar{\delta}_\ell$ for $\ell\geq n_{1}$ and
$\delta_\ell=\max(d,\bar{\delta}_\ell)$ for $\ell<n_{1}$. This function
works.
\end{proof}
We now deduce the theorem from these lemmas.

\begin{proof}[Proof of Theorem~\ref{thm:subadd}]
We may assume that the asymptotic average $A$ vanishes. First, we prove the
easy upper bound. By subadditivity,
\begin{equation*}
  a(n,\omega)-a(n-\ell, T^\ell \omega) \leq a(\ell, \omega),
\end{equation*}
which is almost surely $o(\ell)$ by Kingman's theorem. This proves the upper
bound in Theorem~\ref{thm:subadd}. The stronger statement in
Remark~\ref{rmk:subadd} follows from the fact that the almost sure
convergence $a(\ell,\omega)/\ell \to 0$ is uniform on sets of arbitrarily
large measure.

We turn to the harder lower bound. Let $b(n,\omega)=a(n,T^{-n}\omega)$. This
is a subadditive cocycle for the ergodic transformation $T^{-1}$. We may
therefore apply Lemma~\ref{lem:alldensities} to it. Let $\epsilon>0$. The
lemma gives us a set of good points $O$ with measure at least $1-\epsilon$, a
sequence $\delta_\ell\to0$ and, for $\omega\in O$, a set $U(\omega)$ of bad
times with $\Card{U(\omega)\cap[0,n-1]}\leq\epsilon n$ for every $n$.

Let $O_{n}=\{y\in O \st n \notin U(\omega)\}$ and $P_{n}=T^{-n}O_{n}$. For
$\omega\in P_{n}$ and $\ell\in[1,n]$, one has
\[
a(n,\omega)-a(n-\ell,T^{\ell}\omega)=b(n,T^{n}\omega)-b(n-\ell,T^{n}\omega)\geq -\delta_\ell \ell.
\]
Hence, if a point is contained in an infinite number of the sets $P_{n}$, it
satisfies the conclusion of the theorem. If the times where it belongs to
$P_{n}$ have an asymptotic density at least $1-\rho$, it satisfies even the
stronger conclusion in Remark~\ref{rmk:subadd}. We have to show that this
condition has large measure.

For $\omega\in\Omega$, we define $A(\omega)=\{n:\omega\in P_{n}\}$, its set
of good times. We would like to see that $A(\omega)$ has an upper asymptotic
density larger than $1-\rho$, for $\omega$ in a subset of large measure. Let
$f_{N}(\omega)=\Card{A(\omega)\cap[0,N-1]}$. The bad points are those for
which $f_{N}(\omega)\leq(1-\rho)N$ for all $N$ sufficiently large. Denote by
$V_{i}=\{\omega\st \forall N\geq i,\ f_{N}(\omega)\leq(1-\rho)N\}$, and
$V=\bigcup V_{i}$ the set of bad points.

We have
\begin{align*}
\int f_{N} & =\sum_{n=0}^{N-1}\mu(P_{n})=\sum_{n=0}^{N-1}\mu(O_{n})
 =\int 1_{O}(\omega)\Card{[0,N-1]\setminus U(\omega)}\dd\mu(\omega)\\
 & \geq\int1_{O}(\omega)(1-\epsilon)N\dd\mu(\omega)\geq(1-\epsilon)^{2}N.
\end{align*}

Since $f_{N}\leq N$, we obtain for $N>i$
\[
(1-\epsilon)^{2}N\leq\int f_{N}\leq(1-\rho)N\mu(V_{i})+N(1-\mu(V_{i}))
 =N - \rho N \mu(V_i).
\]
Thus $\mu(V_{i})\leq(1-(1-\epsilon)^2)/\rho$. We deduce that
$\mu(V)\leq(1-(1-\epsilon)^2)/\rho$. If we have chosen $\epsilon$
sufficiently small with respect to $\rho$, this quantity is bounded by
$\rho$. This proves that the lower bound of Theorem~\ref{thm:subadd} (and
even the stronger conclusion in Remark~\ref{rmk:subadd}) is satisfied on a
set of measure greater than $1-\rho$. Since $\rho$ is arbitrary the proof is
complete.
\end{proof}

Here is a small example showing that, even in deterministic situations, one
can not improve the lower bound in Theorem~\ref{thm:subadd} to a bound of the
form $a_n -a_{n-\ell} \geq A\ell - \delta_\ell$ (where $\delta_\ell$ is any
sequence tending to $0$) while keeping a lot of good times. Indeed, consider
a sequence $a_n$ which is either $1$ or $2$ for every $n$. This is always a
subadditive sequence. Assume also that the value $2$ is taken infinitely many
times. Then the times for which $a_n -a_{n-\ell} \geq -\delta_\ell$ for all
$\ell \leq n$ are, up to finitely many exceptions, only the times when
$a_n=2$. Hence, they can be arbitrarily sparse.

\section{Application to multiplicative ergodic theorems}
\label{sec:Application-to-multiplicative}

\begin{proof}[Proof of Theorem~\ref{thm:met}]
Let
\[
a(n,\omega)=d(u(n,\omega)x_{0},x_{0}).
\]
Since the maps are semi-contractive and thanks to the triangle inequality one
verifies easily that this is a subadditive ergodic cocycle with asymptotic
average $A\geq0$. In view of Theorem~\ref{thm:subadd} we have therefore for
almost every $\omega$ a sequence $n_{i}\rightarrow\infty$ and a sequence
$\delta_\ell\to 0$ such that for every $i$ and every $\ell\leq n_{i}$,
\[
d(u(n_{i},\omega)x_{0},x_{0})-d(u(n_i-\ell,T^{\ell}\omega)x_{0},x_{0})\geq(A-\delta_{\ell})\ell.
\]
If we write $x_{n}=u(n,\omega)x_{0}$ and $h_n$ the horofunction associated to
$x_n$, this means that
\begin{align*}
h_{n_i}(x_\ell) = d(x_{n_{i}},x_{\ell})-d(x_{n_i},x_{0})&=d(u(n_{i},\omega)x_{0},u(\ell,\omega)x_{0})-d(u(n_i,\omega)x_{0},x_{0})
\\& \leq
d(u(n_{i}-\ell,T^{\ell}\omega)x_{0},x_{0})-d(u(n_{i},\omega)x_{0},x_{0})
\\& \leq-(A-\delta_{\ell})\ell.
\end{align*}
This inequality passes to limits as $n_{i}\rightarrow\infty$.

If $X$ is separable, one may extract a subsequence of $h_{n_i}$ such that
$h_{n'_i}(y)$ converges for all $y$ belonging to a countable dense set of
$X$. Since all these functions are $1$-Lipschitz, convergence on every point
of $X$ follows. The limit $h$ of $h_{n'_i}$ satisfies for all $\ell$ the
inequality
\begin{equation}
\label{eq:h_limit_ineq}
  h(x_{\ell})\leq-(A-\delta_{\ell})\ell.
\end{equation}

In the general case, $\overline{\Phi(X)}$ is still compact, but it does not
have to be sequentially compact, so we should argue differently. The sets
\[
  Y_i = \{h \in \overline{\Phi(X)} \st \forall \ell \leq n_i,\ h(x_{\ell})\leq-(A-\delta_{\ell})\ell\}
\]
are non-empty (they contain $h_{n_i}$) and form a decreasing sequence. By
compactness, $\bigcap_i Y_i$ is also non-empty. Any element $h$ of this
intersection satisfies~\eqref{eq:h_limit_ineq}.

The bound $\abs{h(x_{\ell})}\leq d(x_{\ell},x_{0})\leq A\ell+o(\ell)$ follows
from the 1-Lipschitz property of $h$ and Kingman's theorem. Therefore
\[
\lim_{n\rightarrow\infty}-\frac{1}{n}h(u(n,\omega)x_{0})=A
\]
as required.

Finally, let us show that $\omega \mapsto h^\omega$ can be chosen to be Borel
measurable if $\Omega$ is standard and $X$ is separable. In this case, the
topology on $\overline{\Phi(X)}$ is generated by simple convergence along a
dense sequence in $X$. Hence, $\overline{\Phi(X)}$ is metrizable, and it
becomes a compact metric space. Since $\Omega$ is standard, we may identify
it with $[0,1]$.

By Remark~\ref{rmk:subadd}, there exists a decomposition of $\Omega$ as the
union of a set $\Omega_\infty$ of measure $0$, and an increasing sequence of
sets $\Omega_i$ on which one can use the same sequence $\delta_{i,\ell}$. By
Lusin's theorem, we may also ensure that all the maps $\omega \mapsto
u(n,\omega)x_0$ are continuous on $\Omega_i$. Let $\Lambda_i=\Omega_i
\setminus \Omega_{i-1}$, it suffices to find a Borel map $\omega \mapsto
h^\omega$ on each $\Lambda_i$. Define
\[
  A(\omega)\coloneqq\{ h\in \overline{\Phi(X)} \st \forall \ell\in \mathbb{N},\ h(x_\ell(\omega)) \leq -(A-\delta_{i,\ell})\ell\}.
\]
This is a nonempty compact subset of $\overline{\Phi(X)}$, depending upper
semi-continuously on $\omega\in \Lambda_i$. We are looking for a measurable
map $\omega \in \Lambda_i \mapsto h^\omega \in A(\omega)$, the only
difficulty being measurability. Its existence follows for instance
from~\cite[Theorem 4.1]{W77}.
\end{proof}

One can deduce Corollary~\ref{cor:Banach} as a consequence of
Theorem~\ref{thm:met} together with the following lemma.

\begin{lem}
In a Banach space, for every metric functional $h$ there is a linear
functional $f$ of norm at most $1$ such that $f\leq h$.
\end{lem}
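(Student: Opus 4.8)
The plan is to construct the linear functional $f$ from the metric functional $h$ by a Hahn--Banach argument, treating $h$ as a ``sublinear-like'' gauge. Recall that $h(x) = \lim_i (\norm{x - x_i} - \norm{x_i})$ for some sequence $x_i$ (or more precisely $h$ lies in the closure $\overline{\Phi(X)}$ with basepoint $x_0 = 0$). First I would verify the elementary properties of $h$ that follow from it being a pointwise limit of the functions $h_{x_i}(y) = \norm{y - x_i} - \norm{x_i}$: namely $h(0) = 0$, $h$ is $1$-Lipschitz (hence $\abs{h(x)} \leq \norm{x}$), and, crucially, a one-sided convexity/subadditivity estimate. The key inequality is that for the function $x \mapsto h(x)$ one has $h(x+y) \geq$ something controlling $h$ from below along rays: concretely, since each $h_{x_i}$ satisfies $h_{x_i}(\lambda x) \geq \lambda h_{x_i}(x) - (\text{error})$... actually the cleanest route is to work with $-h$ and the ``recession'' behaviour.

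The approach I would actually take: define $p(x) = \limsup_{t \to 0^+} \frac{h(tx)}{t}$, or better, exploit that for a metric functional coming from $\norm{\cdot}$, the function $g(x) := -h(-x)$ satisfies $g \leq h$ pointwise is false in general, so instead I would look directly for the supporting functional. The honest plan: show that $q(x) := \lim_{t\to +\infty} \frac{-h(tx)}{t}$ exists (monotone limit by subadditivity of $t \mapsto h(tx) + \text{stuff}$) and defines a sublinear functional on $X$ with $q(x) \leq \norm{x}$; then Hahn--Banach produces a linear $f$ with $f \leq q$ and $\norm{f} \leq 1$; finally one checks $f \leq h$, which should follow because $h$ is $1$-Lipschitz and $h(0)=0$ force $h(x) \geq -\norm{x} \geq -q$-type bounds... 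Let me restructure: the most robust argument is that $h$, being a pointwise limit of the $1$-Lipschitz convex functions $h_{x_i}$, is itself a $1$-Lipschitz function, and although not convex, its behaviour is governed by: for all $x$ and all $n \in \mathbb{N}$, $h(nx) \leq n h(x)$ is the wrong direction — rather $h_{x_i}(nx) = \norm{nx - x_i} - \norm{x_i}$ and by the triangle inequality $\norm{nx - x_i} \geq n\norm{x} - \norm{x_i} \cdot$(no). I would instead use: $h_{x_i}((s+t)x) + \norm{x_i} = \norm{(s+t)x - x_i}$ and convexity of the norm along the segment from $sx$ through $(s+t)x$... giving that $s \mapsto h(sx)$ is convex for each fixed $x$. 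That convexity plus $h(0) = 0$ and $\abs{h(sx)} \leq s\norm{x}$ gives that $\lim_{s \to +\infty} h(sx)/s$ exists and equals $\sup_{s>0} h(sx)/s \in [-\norm x, \norm x]$; call it $-c(x)$ where... hmm the sign.

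Here is the clean version I would write up. Define $\phi(x) = \lim_{s\to 0^+} \frac{h(sx)}{s}$; since $s \mapsto h(sx)$ is convex (from convexity of the norm) and vanishes at $s=0$, the difference quotient is nondecreasing in $s$, so $\phi(x) = \inf_{s > 0} h(sx)/s = \lim_{s\to 0^+} h(sx)/s$ exists in $[-\norm x, \norm x]$, and by construction $\phi(x) \leq h(sx)/s$ for every $s>0$; taking $s=1$ gives $\phi(x) \leq h(x)$. Then I check $\phi$ is positively homogeneous (immediate) and subadditive: $\phi(x+y) \leq \phi(x) + \phi(y)$, which follows by writing $h(s(x+y)) = h_{x_i}$-limits and using $\norm{s(x+y) - 2x_i} \leq \norm{sx - x_i} + \norm{sy - x_i}$ after passing to the limit and dividing by $s$, being careful with the $-\norm{x_i}$ terms — this is the step I expect to be the main obstacle, since the $x_i$ depend on $i$ and one must commute the limit in $i$ with the limit in $s$; I would handle it by noting convexity makes $h(sx)/s$ monotone so the limits interchange cleanly, or by proving subadditivity of $\phi$ directly from a subadditivity property of $h$ itself (namely $h(x+y) \leq h(x) + h_x(x+y)$-type bounds derived from the triangle inequality in the definition of $\overline{\Phi(X)}$). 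Once $\phi$ is sublinear with $\phi \leq \norm{\cdot}$, Hahn--Banach yields a linear functional $f$ with $f \leq \phi$ everywhere, hence $\norm f \leq 1$ and $f \leq \phi \leq h$, completing the proof.
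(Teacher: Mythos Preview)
Your approach is correct and genuinely different from the paper's, though your write-up wanders before settling. The clean version of your argument is this: each $h_y(z)=\norm{z-y}-\norm{y}$ is convex in $z$, and convexity is a finitary condition preserved under pointwise closure, so $h$ itself is a convex function on $X$ with $h(0)=0$ and $\abs{h}\leq\norm{\cdot}$. For a convex function vanishing at the origin, the one-sided directional derivative $\phi(x)=\inf_{s>0}h(sx)/s$ is automatically sublinear: positive homogeneity is immediate, and subadditivity follows from
\[
h(s(x+y))\;\leq\;\tfrac{1}{2}h(2sx)+\tfrac{1}{2}h(2sy),
\]
which is just convexity of $h$ applied at the midpoint. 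No limit in $i$ appears here, so your interchange-of-limits worry evaporates once you pass convexity to $h$ first. Since $\phi\leq h$ (take $s=1$) and $\phi\leq\norm{\cdot}$, the sublinear Hahn--Banach theorem yields a linear $f\leq\phi\leq h$ with $\norm{f}\leq 1$. Note that your displayed inequality $\norm{s(x+y)-2x_i}\leq\norm{sx-x_i}+\norm{sy-x_i}$ is true but bounds the wrong quantity; the useful estimate is the convexity inequality above, with $x_i$ rather than $2x_i$ on the left.

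The paper proceeds differently: it never observes that $h$ is convex and never builds $\phi$. Instead, for each finite $F\subset X$ and $\epsilon>0$ it uses density of $\Phi(X)$ in $\overline{\Phi(X)}$ to find a point $y\in X$ with $h_y<h+\epsilon$ on $F$, takes a norming functional $f$ with $f(y)=-\norm{y}$ (so that $f\leq h_y$ everywhere), and then passes to a weak{*} limit via Banach--Alaoglu. Your argument is more structural, exhibiting a canonical sublinear intermediary $\phi$ between $f$ and $h$; the paper's is more hands-on and uses nothing about $h$ beyond its being approximable by $h_y$'s. Both invoke Hahn--Banach, but the paper pairs it with weak{*} compactness whereas you use the sublinear extension form directly.
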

\begin{proof}
Let $h$ be a metric functional on a Banach space $X$. We will show that, for
any finite set $F$ in $X$ and any $\epsilon>0$, there is a linear form $f$ on
$X$ with norm at most $1$ such that $f(x) \leq h(x)+\epsilon$ for all $x\in
F$. By weak{*} compactness (the Banach-Alaoglu theorem), the existence of a
linear form $f$ with norm at most $1$ such that $f(x)\leq h(x)$ for all $x\in
X$ follows.

The open set
\begin{equation*}
  \{ \tilde h \in \overline{\Phi(X)} \st \forall x\in F,\ \tilde h(x) < h(x)+\epsilon\}
\end{equation*}
is nonempty (it contains $h$). Hence, as $\overline{\Phi(X)}$ is the closure
of $X$, this set contains a point $y \in X$. It satisfies for all $x\in F$
\begin{equation*}
  \norm{y-x} - \norm{y} = h_y(x) < h(x)+\epsilon.
\end{equation*}
By Hahn-Banach theorem, there exists a linear form $f$ of norm $1$ with $f(y)
=-\norm{y}$. Then, for all $x\in F$,
\begin{equation*}
  f(x) = f(x-y) + f(y) \leq \norm{x-y} -\norm{y} \leq h(x)+\epsilon,
\end{equation*}
as desired.
\end{proof}

Alternatively, we give a direct proof of Corollary~\ref{cor:Banach} within
the vector space setting without referring to metric functionals:

\begin{proof}[Proof of Corollary~\ref{cor:Banach} directly from Theorem~\ref{thm:subadd}]
Let \[ a(n,\omega)=\norm{ u(n,\omega)0} ,\] which is a subadditive ergodic
cocycle with asymptotic average $A\geq0$. If $A=0$, then the conclusion is
trivial so we assume that $A>0$. In view of Theorem~\ref{thm:subadd} we have
therefore for almost every $\omega$ a sequence $n_{i}\rightarrow\infty$ and a
sequence $\delta_\ell\to 0$ such that for every $i$ and every $\ell\leq
n_{i}$,
\[
\norm{ u(n_{i},\omega)0} -\norm{ u(n_i-\ell,T^{\ell}\omega)0}
\geq(A-\delta_{\ell})\ell.
\]
We denote by $x_{n}=u(n,\omega)0$. By the Hahn-Banach theorem we can find a
linear functional of norm $1$ such that $f_{n}(x_{n})=\norm{ x_{n}}$. Now,
for any $\ell\leq n_i$,
\begin{align*}
f_{n_{i}}(x_{\ell})& =f_{n_{i}}(x_{n_{i}}+x_{\ell}-x_{n_{i}})
=\norm{u(n_{i},\omega)0} -f_{n_i}(x_{n_i}-x_{\ell})
\\&
  \geq\norm{ u(n_{i},\omega)0} -\norm{u(n_{i},\omega)0-u(\ell,\omega)0}
 \geq \norm{u(n_{i},\omega)0} -\norm{u(n_{i}-\ell,T^{\ell}\omega)0-0}
\\&
\geq(A-\delta_{\ell})\ell.
\end{align*}
By weak{*} compactness, there exists a linear functional $f=f^{\omega}$ of
norm at most $1$ satisfying
\[
  f(x_\ell) \geq (A-\delta_{\ell})\ell
\]
for all $\ell\geq 0$. It follows that
\[
\lim_{\ell\rightarrow\infty}\frac{1}{\ell}f(x_{\ell})=A
\]
a.e.\ as required. In the case $A>0$, the norm of $f$
is clearly necessarily $1$.
\end{proof}
This has in turn another consequence, Corollary~\ref{cor:randommeanergodic},
as is explained in the introduction.

\section{Cocycles of bounded linear operators}
\label{sec:Cocylces-of-bounded}

Invertible $d\times d$ matrices act by isometry on the symmetric space
$\Pos_{d}=GL_{d}/O_{d}$. How Theorem~\ref{thm:met} in this special case
implies Oseledets theorem is explained for example in~\cite{KM99,K04}.
Similarly, bounded linear invertible operators of a Hilbert space $H$ act by
isometry on the space $\Pos(H)$ of the positive elements of the algebra
$B(H)$ of all bounded linear operators $H\rightarrow H$. It is a cone in the
vector space $\Sym(H)=\left\{ a\in B(H):a^{*}=a\right\} $. Thus for
integrable ergodic cocycles of bounded linear operators we may again apply
Theorem~\ref{thm:met}. In contrast to the finite dimensional case, the metric
of $\Pos$ is less nice and the space is not locally compact. Therefore the
metric functionals are less studied at present time. An alternative approach
is provided thanks to Segal's inequality
\[
\norm{ e^{u+v}} \leq\norm{e^{u/2}e^{v}e^{u/2}} .
\]
This implies a weak notion of non-positive curvature: the differential
isomorphism $\exp:\Sym\rightarrow \Pos$ semi-expands distances. This means
that the inverse, the logarithm, is semi-contractive.

So let
$v(n,\omega)=\varphi(T^{n-1}\omega)\varphi(T^{n-2}\omega)\dotsm\varphi(\omega)$
be an integrable ergodic cocycle of bounded invertible linear operators of
$H$. Note that we here take the opposite order compared to~\eqref{eq:def_u}.
Hence if we denote $p_{n}$ the positive part of $v(n,\omega)$, that is
\[
p_{n}(\omega)=\left(v(n,\omega)^{*}v(n,\omega)\right)^{1/2},
\]
then $a(n,\omega)\coloneqq \norm{ \log p_{n}(\omega)} $ is a subadditive
cocycle, where the norm is the operator norm. Indeed, notice that the $p_{n}$
is the orbit of the matrices $v(n,\omega)^{*}$ acting by isometry on $\Pos$,
now in the right order for the metric statements. The distance from the identity to the $n$th point of the random orbit in $\Pos$
gives a subadditive cocycle. Thanks to that logarithm preserves distance from
Id to $p$, and contracts distances between $p$ and $q$ not the identity,
the inequalities between distances go in the right direction, so that the
proof of Corollary~\ref{cor:Banach} as given in
Section~\ref{sec:Application-to-multiplicative} goes through. We conclude
that for a.e.\ $\omega$ there is a linear functional $F_{\omega}$ on $\Sym$
(or on the full space of bounded linear operators, by Hahn-Banach) of norm 1 such that
\[
\lim_{n\rightarrow\infty}\frac{1}{n}F_{\omega}\left(\log
p_{n}(\omega)\right)=\lim_{n\rightarrow\infty}\frac{1}{n}\norm{ \log
p_{n}(\omega)} ,
\]
which is Theorem~\ref{thm:operators}.

For comparison, the classical formulation of the multiplicative ergodic
theorem is equivalent to the fact that $p_{n}(\omega)^{1/n}$, or
$\frac{1}{n}\log p_{n}(\omega)$, converges in norm as $n\rightarrow\infty$.
In general $\Sym$ is not uniformly convex, so our weaker convergence seems
near best possible. Under stronger assumptions, one can probably promote it
to a stronger statement. When $\Sym$ is a Hilbert space, for example in the
setting of Hilbert-Schmidt operators, the linear functionals are given by
$M\mapsto \Tr(A M)$. This implies Oseledets theorem (it is actually stronger,
a more uniform convergence) as Ruelle explains for compact operators,
see~\cite{KM99}.

As was remarked in the introduction, it is well-known that in general, even
for products of just one operator, one cannot hope for a Oseledets-type
Lyapunov regularity.


%

\section{Cocycles of holomorphic maps}
\label{sec:Cocycles-of-holomorphic}

Pseudo-metrics are frequently employed in the theory of several complex
variables, partly because of the Schwartz lemma but also thanks to their
connection to diophantine problems (Lang's conjecture). Given a complex space $Z$, we denote the
associated Kobayashi pseudo-distance $d_{Z}$. Every holomorphic map between
complex spaces $f:Z\rightarrow W$ is 1-Lipschitz with respect to these
pseudo-metrics:\[ d_{W}(f(z_{1}),f(z_{2}))\leq d_{Z}(z_{1},z_{2})\] for all
points $z_{1},z_{2}\in Z$. \emph{Pseudo} refers to that for these distances
the axiom about $d(x,y)=0$ may fail. On one extreme we have that the
pseudo-metric on $\mathbb{C}$ is identically $0$ for all pairs of points,
while for hyperbolic Riemann surfaces it is always non-degenerate, and thus
an honest metric. (These facts already explain the theorems of Liouville and
Picard on entire functions.)

Many papers have been devoted to the topic of extending the Wolff-Denjoy
theorem, and there are also papers about composing random maps, in both
orders, generalizing continued fraction expansion, see~\cite{KeL07} and
references therein.

Even for a pseudo-metric one defines metric functionals and horofunctions as before. So our
multiplicative ergodic theorem in principle applies, and gives an extension
of the Wolff-Denjoy theorem to a vastly more general situation:
\begin{thm}
Let $u(n,\omega)$ be an integrable ergodic cocycle of holomorphic self-maps
of a complex space $Z$. Then for a.e.\ $x$ there is a metric functional
$h^{\omega}$ for the pseudo-metric space $(Z,d_{Z})$ such that
\[
\lim_{n\rightarrow\infty}-\frac{1}{n}h^{\omega}(u(n,\omega)z)=\tau\coloneqq
\lim_{n\rightarrow\infty}\frac{1}{n}d_Z(u(n,\omega)z,z).
\]

\end{thm}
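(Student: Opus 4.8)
The plan is to deduce this theorem directly from Theorem~\ref{thm:met} by observing that all the ingredients used in the metric setting carry over verbatim to the pseudo-metric setting, the only subtlety being to check that no step in the proof of Theorem~\ref{thm:met} actually used the separation axiom $d(x,y)=0 \implies x=y$. First I would note that if $(Z,d_Z)$ is a pseudo-metric space, one still has a continuous (no longer necessarily injective) map $\Phi:Z\to\mathbb{R}^Z$, $z\mapsto h_z(\cdot)=d_Z(\cdot,z)-d_Z(x_0,z)$, whose image closure $\overline{\Phi(Z)}$ is compact in the product topology; the functions $h_z$ are still $1$-Lipschitz, so we may define metric functionals exactly as before. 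Every holomorphic self-map $f:Z\to Z$ is $1$-Lipschitz for $d_Z$ by the Schwarz--Pick/Kobayashi contraction property, hence lies in $SC(Z)$, so $u(n,\omega)$ is an ergodic cocycle of semi-contractions in precisely the sense of Paragraph~\ref{subsec:ergodic_product}.

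The second step is to form $a(n,\omega)=d_Z(u(n,\omega)x_0,x_0)$ and check it is an integrable subadditive cocycle: the triangle inequality (which holds for pseudo-metrics) and the $1$-Lipschitz property give subadditivity exactly as in the proof of Theorem~\ref{thm:met}, and integrability is the hypothesis on the cocycle. Kingman's theorem then yields the speed $\tau=\lim d_Z(u(n,\omega)x_0,x_0)/n \geq 0$, which is the quantity called $A$ in Theorem~\ref{thm:met}. Now I would simply apply Theorem~\ref{thm:met} to $(Z,d_Z)$: rereading its proof, every estimate uses only the triangle inequality, $1$-Lipschitzness, compactness of $\overline{\Phi(Z)}$, and Kingman's theorem, none of which requires points at distance $0$ to be equal. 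Hence the proof goes through unchanged and produces, for a.e.\ $\omega$, a metric functional $h^\omega$ with $\lim_n -\frac1n h^\omega(u(n,\omega)x_0)=\tau$; since the limit defining $\tau$ is independent of the basepoint, the conclusion holds for all $z\in Z$ (the discrepancy between basepoints $z$ and $x_0$ is bounded by $d_Z(z,x_0)$, which is $o(n)$).

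The one thing genuinely worth spelling out — and the main ``obstacle,'' though it is a mild one — is the justification that Theorem~\ref{thm:met} applies to pseudo-metric spaces. The cleanest way is to quotient: let $\hat Z = Z/\!\sim$ where $z\sim z'$ iff $d_Z(z,z')=0$; then $d_Z$ descends to an honest metric $\hat d$ on $\hat Z$, the maps $u(n,\omega)$ descend to semi-contractions of $(\hat Z,\hat d)$ (they respect $\sim$ since they are $1$-Lipschitz), the cocycle stays integrable, and metric functionals of $(Z,d_Z)$ are exactly the pullbacks of metric functionals of $(\hat Z,\hat d)$. Apply Theorem~\ref{thm:met} on $\hat Z$ to obtain $\hat h^\omega$, and set $h^\omega = \hat h^\omega \circ \pi$ where $\pi:Z\to\hat Z$ is the projection; this is the desired metric functional on $Z$, and the limit identity transfers immediately since $d_Z(u(n,\omega)z,z)=\hat d(\pi u(n,\omega)z,\pi z)$ and $h^\omega(u(n,\omega)z)=\hat h^\omega(\pi u(n,\omega)z)$. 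This quotient argument is routine and I would state it in two lines rather than belabour it.
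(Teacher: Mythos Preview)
Your proposal is correct and follows the paper's approach exactly: the paper simply remarks that ``even for a pseudo-metric one defines metric functionals and horofunctions as before,'' so Theorem~\ref{thm:met} ``in principle applies,'' and states the result without further proof. Your write-up actually supplies more justification than the paper does, in particular the quotient argument $\hat Z = Z/\!\sim$, which is a clean way to reduce rigorously to the genuine metric case; the paper leaves this implicit.
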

It remains to understand horofunctions. Under certain convexity and
smoothness assumptions, these metrics are Gromov hyperbolic or something
slightly weaker, and our result then implies that the orbit converges to a
boundary point, provided $\tau>0$. For the state-of-the-art of the metric
geometry of the Kobayashi metric, we refer to~\cite{K05,AR14,Z14} and
references therein. Here is a corollary:
\begin{cor}
Let $u(n,\omega)$ be an integrable ergodic cocycle of holomorphic maps of
$D$, where $D$ is a bounded domain in $\mathbb{C}^{d}$ which is either
strictly convex, strictly pseudo-convex with $C^{2}$-smooth boundary, or
pseudo-convex with analytic boundary. Unless for a.e.~$\omega$
\[
\frac{1}{n}d_D(u(n,\omega)z,z)\rightarrow 0
\]
as $n\rightarrow\infty$, it holds that a.e.\ orbit
$u(n,\omega)z$ converges to some boundary point $\xi_{\omega}\in\partial D$.
The boundary point may depend on $\omega$ but is independent of $z\in D$.
\end{cor}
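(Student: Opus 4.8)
The plan is to combine Theorem~\ref{thm:met} (in its pseudo-metric form stated just above for holomorphic cocycles) with the geometric analysis of the Kobayashi metric available in the literature for the three listed classes of domains. By Theorem~\ref{thm:met}, for a.e.~$\omega$ there is a metric functional $h^\omega$ of $(D,d_D)$ with
\[
  \lim_{n\to\infty} -\frac{1}{n} h^\omega(u(n,\omega)z) = \tau = \lim_{n\to\infty}\frac{1}{n} d_D(u(n,\omega)z,z),
\]
and the hypothesis of the corollary is precisely that $\tau>0$ on a positive-measure set of $\omega$'s; we work with such an $\omega$. Writing $x_n=u(n,\omega)z$, the displayed limit forces $h^\omega(x_n)\to-\infty$, hence $d_D(x_n,z)\to\infty$, so in particular the orbit leaves every Kobayashi ball; since $D$ is bounded (hence $d_D$-complete with $d_D$-balls relatively compact in $D$ for these domains, and $d_D$ non-degenerate), the orbit must accumulate only on $\partial D$. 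The remaining task is to upgrade ``accumulates on the boundary'' to ``converges to a single boundary point''.

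For that step I would invoke the known identification of the horofunction boundary (equivalently, the relevant compactification) of $D$ with its Euclidean boundary $\partial D$ for each of the three classes. Concretely: for strictly convex bounded domains, and for strictly pseudoconvex domains with $C^2$ boundary, $(D,d_D)$ is Gromov hyperbolic (Balogh--Bonk) and the Gromov boundary is canonically homeomorphic to $\partial D$; for pseudoconvex domains with real-analytic boundary one has the analogous boundary estimates (work of Zimmer, and of Bharali--Zimmer on Goldilocks domains) identifying metric functionals/horofunctions with boundary points. In all cases, a metric functional $h^\omega$ that is ``unbounded below'' along a sequence is (up to the usual normalization) a horofunction centered at a unique $\xi_\omega\in\partial D$, and the sublevel sets $\{h^\omega < -t\}$ are horoball-like neighborhoods shrinking to $\xi_\omega$ as $t\to\infty$. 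Since $h^\omega(x_n)\to-\infty$, the orbit is eventually trapped in arbitrarily small such neighborhoods of $\xi_\omega$, whence $x_n\to\xi_\omega$ in the Euclidean topology.

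Finally, independence of the basepoint $z$ follows from the $1$-Lipschitz property of the maps $u(n,\omega)$ together with non-expansiveness of $d_D$: for two basepoints $z,z'$ one has $d_D(u(n,\omega)z, u(n,\omega)z') \le d_D(z,z')$, a bounded quantity, so the two orbits stay at bounded Kobayashi distance; two sequences tending to the boundary at bounded $d_D$-distance must converge to the same boundary point, because points at bounded distance lie in a common horoball and the horoballs at distinct boundary points are eventually disjoint (again by the boundary estimates cited above). The same argument shows $\xi_\omega$ does not depend on which subsequential limit of metric functionals was chosen. I expect the main obstacle to be purely expository: one must cite the correct regularity/convexity hypotheses under which the Kobayashi horofunction boundary is known to coincide with $\partial D$ and under which horoballs at distinct boundary points are eventually disjoint — this is exactly where the three separate cases (strictly convex; strictly pseudoconvex $C^2$; pseudoconvex real-analytic) enter, each relying on a different boundary-estimate theorem from~\cite{K05,AR14,Z14}; no new analysis is required beyond assembling these inputs with Theorem~\ref{thm:met}.
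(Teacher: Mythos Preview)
Your outline is essentially the paper's: apply Theorem~\ref{thm:met} to get a metric functional $h^\omega$ with $h^\omega(u(n,\omega)z)\to-\infty$ when $\tau>0$, then invoke known boundary geometry of $(D,d_D)$ to force convergence to a unique $\xi_\omega\in\partial D$. The independence-of-basepoint step is also the same idea.

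There is one concrete gap. For the strictly convex case you invoke Gromov hyperbolicity via Balogh--Bonk, but their theorem requires a $C^2$-smooth strongly pseudoconvex boundary; mere strict convexity (with no smoothness) is not covered, and Gromov hyperbolicity is not known in that generality. The paper does \emph{not} use hyperbolicity here: instead it takes a sequence $x_n\to\xi\in\partial D$ defining $h^\omega$ and quotes from~\cite{AR14} that Abate's big horosphere
\[
F_{x}(\xi,R)=\Bigl\{ z\in D \st \liminf_{w\rightarrow\xi}\bigl(d(z,w)-d(x,w)\bigr)<\tfrac{1}{2}\log R\Bigr\}
\]
meets $\partial D$ only at $\xi$; since $\{h^\omega<0\}\subset F_x(\xi,0)$, the orbit is trapped near $\xi$. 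For the remaining two cases the paper argues exactly in the spirit you suggest, but via the Gromov product rather than an abstract horoball statement: from~\cite{K05} one has that if $z_m\to\eta\neq\xi$ then $(z_m,x_n)_x$ stays bounded, which is incompatible with $h^\omega(z_m)<0$ and $d(x,z_m)\to\infty$. So your plan goes through once you swap the Balogh--Bonk citation in the strictly convex case for the big-horosphere input from~\cite{AR14}.
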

\begin{proof}
It is known that under these assumptions on $D$ the metric space $(D,d)$ is a
proper metric space, where $d=d_{D}$ the Kobayashi metric. Assuming $\tau>0$,
the orbit accumulates on $\partial D$ and our Theorem~\ref{thm:met} provides
for a.e.\ $\omega$ a horofunction $h$ such that \[
h(u(n,\omega)z)\rightarrow-\infty\] when $n\rightarrow\infty$ and any $z\in
D$. We may assume that a sequence $x_{n}$ defining $h$ (say with base point
$x$) converges to some point $\xi$ in $\partial D.$

In the case $D$ is strictly convex it is shown in~\cite{AR14} that Abate's
big horospheres
\[
F_{x}(\xi,R)=\left\{ z\in D \st
\liminf_{w\rightarrow\xi}d(z,w)-d(x,w)<\frac{1}{2}\log R\right\}
\]
can only meet the boundary in one point. It is clear that $\left\{ z \st
h(z)<0\right\} $ is contained in $F_{x}(\xi,0)$. Thus we must have that
$u(n,\omega)z\rightarrow\xi$ as $n\rightarrow\infty$.

In the two remaining cases it is known from~\cite{K05} and references therein
that for any sequence $z_{n}$ converging to a different boundary point, there
is a constant $R>0$ such that for all $n,m$
\[ R>(z_{m},x_{n})\coloneqq
\frac{1}{2}\left(d(x,z_{m})+d(x,x_{n})-d(z_{m},x_{n})\right).
\]
Therefore it
would be impossible that $h(z_{m})<0$ since $d(x,z_{m})\rightarrow\infty$.
Hence again the conclusion that $u(n,\omega)z\rightarrow\xi$.
\end{proof}

\section{Behaviour of extremal length under holomorphic self-maps of Teichm\"uller
space} \label{sec:Behaviour-of-extremal}

Thurston announced in his celebrated preprint from 1976~\cite[Theorem 5]{T88}
that isotopy classes of surface diffeomorphisms admit some kind of Lyapunov
exponents. Let $S$ be a closed surface of genus $g\geq 2$. For any isotopy
class $\alpha$ of simple closed curves on $S$, and $\rho$ a Riemannian
metric, the length $L_{\rho}(\alpha)$ is the shortest length of a curve in
the isotopy class $ \alpha$ for the metric $\rho$. Given a diffeomorphism $f$
of $S$, there are a finite number of exponents $\lambda_i$ such that
\[
L_{\rho}(f^{n}\alpha)^{1/n}\rightarrow\lambda_i
\]
as $n\rightarrow\infty$ for some $i$ depending on $\alpha$. In the generic case there is
only one exponent. This is proved passing to the Teichm\"uller space $T_{g}$ and
instead letting $f$ act on the equivalence classes of metrics, which are
points in $T_{g}$. Indeed, one has
\[
L_{\rho}(f^{n}\alpha) =
L_{f^{-n}\rho}(\alpha).
\]

This was partly generalized to cocycles in~\cite{K14}, see also~\cite{H14}
for a refinement in the i.i.d.\ case. In several instances, again mainly due
to Thurston, certain holomorphic self-maps of the Teichm\"uller space arise.
Unless they are biholomorphic they do not give rise to an isotopy class of
diffeomorphisms of the underlying surface. It is therefore natural to
consider how lengths behave under the metric $u(n,\omega)$ with this order of
composition. In the case of holomorphic maps it is more natural to consider
length from complex analysis, namely the extremal length of
Beurling-Ahlfors:
\[
\Ext_{x}(\alpha)=\sup_{\rho\in\left[x\right]}\frac{L_{\rho}(\alpha)^{2}}{Area(\rho)},
\]
where the supremum is taken over all metrics in the conformal class of $x$.
We get applying our main theorem to the Teichm\"uller distance, using the
identification of horofunctions in this metric due to Liu and Su, and
following the arguments in~\cite{K14}:
\begin{thm}
Let $u(n,\omega)$ be an integrable cocycle of holomorphic self-maps of the
genus~$g$ Teichm\"uller space $T_{g}$. Denote by $d_{T}$ the Teichm\"uller
distance. Then for a.e.~$\omega$ there is a simple closed curve
$\alpha=\alpha_{\omega}$ such that
\[ \lim_{n\rightarrow\infty}\frac{1}{n}\log
\Ext_{u(n,\omega)\rho}(\alpha)
=2\lim_{n\rightarrow\infty}\frac{1}{n}d_{T}(u(n,\omega)\rho,\rho).
\]
\end{thm}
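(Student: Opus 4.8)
The plan is to reduce the statement to a direct application of Theorem~\ref{thm:met} applied to the Teichm\"uller distance $d_T$, followed by translating the resulting metric functional into information about extremal length via the known description of horofunctions in this metric. First I would set $x_n = u(n,\omega)\rho$ and observe that, since holomorphic self-maps of $T_g$ are $1$-Lipschitz for $d_T$ (the Teichm\"uller metric being the Kobayashi metric of $T_g$), the cocycle $u(n,\omega)$ is an integrable ergodic cocycle of semi-contractions of $(T_g, d_T)$; Theorem~\ref{thm:met} then provides, for a.e.~$\omega$, a metric functional $h^\omega$ with
\[
  \lim_{n\to\infty} -\frac1n h^\omega(u(n,\omega)\rho) = \tau \coloneqq \lim_{n\to\infty}\frac1n d_T(u(n,\omega)\rho,\rho).
\]
Because $T_g$ is proper (it is a finite-dimensional manifold with a complete metric), the metric functional $h^\omega$ is in fact a horofunction, obtained as a uniform-on-compacta limit $h^\omega = \lim_i h_{x_{n_i}}$ along a subsequence of the orbit tending to infinity in $T_g$ (the case $\tau>0$ forcing the orbit out of every compact set; if $\tau = 0$ both sides vanish and there is nothing to prove, so assume $\tau>0$).

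The second and more substantial step is to identify this horofunction. Here I would invoke the description of the horofunction boundary of $(T_g, d_T)$ due to Liu and Su: every Teichm\"uller horofunction is, up to the usual normalization, of the form
\[
  h(y) = \tfrac12 \log \frac{\Ext_y(\alpha)}{\Ext_\rho(\alpha)}
\]
for a suitable (possibly measured) foliation or simple closed curve $\alpha$, reflecting Kerckhoff's formula $d_T(y,z) = \tfrac12 \log \sup_{\beta} \Ext_y(\beta)/\Ext_z(\beta)$. Specializing—or approximating measured foliations by weighted simple closed curves as in~\cite{K14}—one obtains a simple closed curve $\alpha = \alpha_\omega$ with $h^\omega(y) = \tfrac12\log(\Ext_y(\alpha)/\Ext_\rho(\alpha)) + o(1)$ along the orbit, or at least with the correct growth rate. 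Feeding $y = u(n,\omega)\rho$ into this and using the limit from Theorem~\ref{thm:met} gives
\[
  \lim_{n\to\infty}\frac1n \log \Ext_{u(n,\omega)\rho}(\alpha) = -2\lim_{n\to\infty}\frac1n h^\omega(u(n,\omega)\rho) = 2\tau,
\]
which is exactly the claimed identity.

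The main obstacle I anticipate is the precise passage from an arbitrary Teichm\"uller horofunction—a priori indexed by a \emph{measured} foliation—to a single \emph{simple closed curve} $\alpha_\omega$, keeping the asymptotics exact rather than merely up to bounded error. Kerckhoff's formula and the Liu--Su boundary give the horofunction in terms of a measured foliation $\mathcal F$; simple closed curves are dense (projectively) in the space of measured foliations and extremal length is continuous, so one can approximate, but one must check that the approximation can be carried out \emph{uniformly along the orbit} so that the limit $\lim_n \frac1n \log\Ext_{x_n}(\alpha)$ genuinely equals $2\tau$ and not just lies within $\varepsilon$ of it. This is precisely the technical point handled in~\cite{K14} in the deterministic/cocycle-of-diffeomorphisms setting, and the same argument—choosing $\alpha_\omega$ realizing (or nearly realizing) the sup in Kerckhoff's formula for the limiting foliation and using that the ratio $\Ext_{x_n}(\alpha_\omega)/\sup_\beta \Ext_{x_n}(\beta)$ stays bounded below—adapts verbatim. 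I would also need to confirm measurability of $\omega \mapsto \alpha_\omega$, which follows from the Borel measurability of $\omega\mapsto h^\omega$ in Theorem~\ref{thm:met} together with Borel measurability of the foliation-extraction map; $T_g$ being separable and $\Omega$ standard, the hypotheses of that part of Theorem~\ref{thm:met} are met.
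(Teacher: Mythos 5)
Your overall plan — apply Theorem~\ref{thm:met} to $(T_g,d_T)$, invoke the Liu--Su identification of the Teichm\"uller horofunction boundary, and follow~\cite{K14} — is exactly the route the paper indicates. But the step where you extract $\alpha_\omega$ from $h^\omega$ is wrong, and the error already shows up as a sign inconsistency in your own algebra. You posit $h^\omega(y)=\tfrac12\log(\Ext_y(\alpha)/\Ext_\rho(\alpha))$; this rearranges to $\log\Ext_y(\alpha)=2h^\omega(y)+\log\Ext_\rho(\alpha)$, so $\tfrac1n\log\Ext_{x_n}(\alpha)\to 2\lim\tfrac1n h^\omega(x_n)=-2\tau$, the \emph{opposite} of what you wrote (your ``$=-2\lim\frac1n h^\omega$'' should be $+2$). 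This negative answer is not an artifact: the Thurston-type horofunction $\tfrac12\log(\Ext_{\cdot}(F)/\Ext_\rho(F))$ really does go to $-\infty$ along the orbit, which means the foliation $F$ appearing in it has \emph{shrinking} extremal length $\Ext_{x_n}(F)\approx e^{-2\tau n}$. The object ``inside'' the horofunction is therefore not the curve $\alpha_\omega$ sought by the theorem.

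The correct extraction goes through the full Gardiner--Masur/Liu--Su formula rather than its Thurston-boundary specialization. By Liu--Su, the horofunction attached to a boundary point $\xi=[E]$ is
\[
  h_\xi(z)=\tfrac12\log\frac{\sup_\beta E(\beta)^2/\Ext_z(\beta)}{\sup_\beta E(\beta)^2/\Ext_\rho(\beta)},
\]
whose Thurston special case $E=i(F,\cdot)$ recovers your formula via $\sup_\beta i(F,\beta)^2/\Ext_z(\beta)=\Ext_z(F)$. Theorem~\ref{thm:met} gives $h^\omega(x_\ell)=-\tau\ell+o(\ell)$, hence $\sup_\beta E(\beta)^2/\Ext_{x_\ell}(\beta)=C\,e^{-2\tau\ell+o(\ell)}$ with $C=\sup_\beta E(\beta)^2/\Ext_\rho(\beta)$. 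In particular, for \emph{every} $\beta$,
\[
  \Ext_{x_\ell}(\beta)\ \ge\ C^{-1}E(\beta)^2\,e^{2\tau\ell-o(\ell)}.
\]
Now pick $\alpha=\alpha_\omega$ to be any \emph{simple closed curve} with $E(\alpha)>0$; such curves exist because $E$ is continuous, not identically zero, and simple closed curves are dense in $\mathcal{PMF}$. The lower bound above, together with the universal Kerckhoff upper bound $\Ext_{x_\ell}(\alpha)\le\Ext_\rho(\alpha)e^{2d_T(x_\ell,\rho)}=\Ext_\rho(\alpha)e^{2\tau\ell+o(\ell)}$, yields $\tfrac1\ell\log\Ext_{x_\ell}(\alpha)\to 2\tau$, as required. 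Observe that this also dissolves the difficulty you flagged about passing from a measured foliation to a single simple closed curve: you never approximate the orbit's limit foliation at all, you merely pick any curve not annihilated by $E$, and the bound is then exact. The remaining pieces of your write-up — properness of $(T_g,d_T)$, dispatching the case $\tau=0$ by noting that Kerckhoff's formula already forces $\tfrac1n\log\Ext_{x_n}(\alpha)\to 0$ for every $\alpha$ when $d_T(x_n,\rho)=o(n)$, and measurability of $\omega\mapsto\alpha_\omega$ from the Borel version of Theorem~\ref{thm:met} — are fine.
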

The link between the Teichm\"uller metric $d_{T}$ and extremal length comes via
Kerckhoff's formula:
\begin{equation*}
  d_T(x,y) = \sup_{\alpha} \frac{1}{2} \log \frac{\Ext_x(\alpha)}{\Ext_y(\alpha)}.
\end{equation*}

\end{document}